\definecolor{citation}{rgb}{0.2,0.3,1}
\newtheoremstyle{par}{1ex}{2ex}{\rm}{}{\bfseries}{}{0.8em}{\thmnumber{(#2)}}
\newtheoremstyle{thm}{1ex}{2ex}{\itshape}{}{\bfseries}{}{0.9em}{\thmnumber{(#2)}\thmname{ #1}\thmnote{ (#3)}}
\newtheoremstyle{ex}{1ex}{2ex}{\rm}{}{\bfseries}{}{0.8em}{\thmnumber{(#2)}\thmname{ #1}}
\theoremstyle{par}
\newtheorem{no}{}[section]
\theoremstyle{thm}
\newtheorem{prop}[no]{Proposition}
\newtheorem{cor}[no]{Corollary}
\newtheorem{thm}[no]{Theorem}
\newcommand{\N}{\mathbbm{N}}
\newcommand{\Z}{\mathbbm{Z}}
\newcommand{\Q}{\mathbbm{Q}}
\newcommand{\C}{{\sf C}}
\newcommand{\D}{{\sf D}}
\newcommand{\E}{{\sf E}}
\newcommand{\NN}{{\bf N}}
\newcommand{\LL}{{\bf L}}
\newcommand{\dfgl}{\mathrel{\mathop:}=}
\newcommand{\ab}{{\sf Ab}}
\newcommand{\grmod}{{\sf Mod}}
\newcommand{\hm}[4]{{\rm Hom}^{#1}_{#2}(#3,#4)}
\newcommand{\hme}{{\rm hom}}
\newcommand{\Id}{{\rm Id}}
\newcommand{\h}{\widetilde{h}}
\newcommand{\hsub}{\underline{h}}
\newcommand{\eps}{\varepsilon}
\newcommand{\thet}{\vartheta}
\newcommand{\ps}{_{[\psi]}}
\newcommand{\sq}{\hskip2pt\raisebox{.225ex}{\rule{.8ex}{.8ex}\hskip2pt}}
\newcommand{\ia}{\mathfrak{a}}
\DeclareMathOperator{\ke}{Ker}
\begin{document}

\newcommand{\leadingzero}[1]{\ifnum #1<10 0\the#1\else\the#1\fi} 
\renewcommand{\today}{\the\year\leadingzero{\month}\leadingzero{\day}}

\title{Graded change of ring}
\author{Fred Rohrer}
\address{Grosse Grof 9, 9470 Buchs, Switzerland}
\email{fredrohrer@math.ch}
\subjclass[2010]{Primary 16W50; Secondary 13A02} 
\keywords{Graded module; coarsening; scalar restriction; scalar extension; scalar coextension; epimorphism of rings}

\begin{abstract}
We investigate scalar restriction, scalar extension, and scalar coextension functors for graded modules, including their interplay with coarsening functors, graded tensor products, and graded Hom functors. This leads to several characterisations of epimorphisms of graded rings.
\end{abstract}

\maketitle\thispagestyle{fancy}


\section*{Introduction}

By a \textit{group} or a \textit{ring} we always mean a commutative group or a commutative ring. A morphism of rings $h\colon R\rightarrow S$ induces certain change of ring functors, namely the ubiquitous scalar extension \[h^*\colon\grmod(R)\rightarrow\grmod(S)\] and scalar restriction \[h_*\colon\grmod(S)\rightarrow\grmod(R)\] (\cite[II.1.13, II.5]{a}), as well as the slightly less prominent scalar coextension \[\h\colon\grmod(R)\rightarrow\grmod(S)\] (\cite[II.5.1 Remarque 4]{a}, \cite[II.6]{ce}, \cite[5.1]{lambek}). If $h\colon R\rightarrow S$ is a morphism of $G$-graded rings for some group $G$, then analogue functors between the categories of $G$-graded modules can be defined. It is the goal of this article to comprehensively study the three functors $h^*$, $h_*$ and $\h$ in such a graded setting. In accordance with the yoga of coarsening (cf.\ \cite{cihf}), we consider throughout an epimorphism of groups $\psi\colon G\rightarrow H$ and investigate the behaviour of the above three functors with respect to the coarsening functor \[\bullet\ps\colon\grmod^G(R)\rightarrow\grmod^H(R\ps).\]

Most of the results in this article are rather easy to prove and moreover not astonishing at all. Exceptions might be the various characterisations of epimorphisms of (graded) rings (\ref{d70}) and its corollary about preservation of epimorphisms of graded rings by coarsening functors (\ref{d80}).) However, when working with graduations it seems desirable to have a such a comprehensive treatment in written form, which to the authors knowledge was not available previously.\medskip

In Section 1 we collect some preliminaries on graded modules and coarsening functors. Most of them may be well-known, but for lack of reference and ease of readability we often provide detailed explanations and full proofs.

In Section 2 we define the change of ring functors $h_*$, $h^*$ and $\h$ and investigate their behaviour under coarsening (\ref{b10}, \ref{b20}, \ref{b30}). One should note that since Hom functors need not commute with coarsening in general (\cite[Section 3]{cihf}), neither need $\h$. As a byproduct we get the graded version of the Hom-tensor adjunction (\ref{b50}), and as an application thereof we derive some properties of the canonical morphisms \[\hm{G}{R}{L}{M}\otimes_RN\rightarrow\hm{G}{R}{L}{M\otimes_RN}\] and \[M\otimes_RN\rightarrow\hm{G}{R}{\hm{G}{R}{M}{R}}{N},\] where Hom is understood to be taken in the category of $G$-graded $R$-modules (\ref{b70}, \ref{b80}).

Section 3 starts with recognising an adjunction $(h^*,h_*)$ whose counit is an epimorphism, and whose unit is a mono-, epi- or isomorphism if and only if $h$, considered as a morphism of $G$-graded $R$-modules, is pure, an epimorphism, or an isomorphism, and an adjunction $(h_*,\h)$ whose unit is a monomorphism, and whose counit is a mono-, epi- or isomorphism if and only if $h$, considered as a morphism of $G$-graded $R$-modules, is an epimorphism, a section, or an isomorphism (\ref{c20}, \ref{c40}, \ref{c50}). Then, we have a look at exactness properties of change of ring functors. The aformentioned adjunctions cause $h_*$ and $h^*$ to commute with inductive limits and $h_*$ and $\h$ to commute with projective limits. Additionally, we show that the following statements are equivalent: (i) $\h$ commutes with inductive limits; (ii) $h^*$ commutes with projective limits; (iii) $h_*(S)$ is projective and of finite type; (iv) the adjoint triple $(h^*,h_*,\h)$ can be extended to the left and the right (\ref{c79}, \ref{c120}). Moreover, in this case we describe these further adjoints (\ref{c140}) and get as an application a graded version of Morita's characterisation of the coincidence of scalar extension and coextension (\cite{morita}): We have $h^*\cong\h$ if and only if $h_*(S)$ is projective and of finite type and $\h(R)\cong S$ (\ref{c150}).

Section 4 is about the interplay of change of ring functors with tensor products and Hom functors. We construct and study an isomorphism \[\delta^h\colon h^*(\bullet)\otimes_Sh^*(\sq)\rightarrow h^*(\bullet\otimes_R\sq),\] an epimorphism \[\gamma^h\colon h_*(\bullet)\otimes_Rh_*(\sq)\rightarrow h_*(\bullet\otimes_S\sq),\] and a morphism \[\eps^h\colon\h(\bullet)\otimes_S\h(\sq)\rightarrow\h(\bullet\otimes_R\sq)\] that need neither be a mono- nor an epimorphism (\ref{d10}, \ref{d20}, \ref{d25}). We also construct a monomorphism \[\eta^h\colon h_*(\hm{G}{S}{\bullet}{\sq})\rightarrow\hm{G}{R}{h_*(\bullet)}{h_*(\sq)}\] and a morphism \[\thet^h\colon h^*(\hm{G}{R}{\bullet}{\sq})\rightarrow\hm{G}{S}{h^*(\bullet)}{h^*(\sq)}\] that need neither be a mono- nor an epimorphism (\ref{d30}, \ref{d40}). (Unfortunately, the\linebreak author was not able to find a reasonable morphism between $\h(\hm{G}{R}{\bullet}{\sq})$ and\linebreak $\hm{G}{S}{\h(\bullet)}{\h(\sq)}$.) Finally, we show that the following statements are equivalent:\linebreak (i) $h$ is an epimorphism of $G$-graded rings; (ii) $\gamma^h$ is an isomorphism; (iii) $\eta^h$ is an isomorphism; (iv) the counit of $(h^*,h_*)$ is an isomorphism; (v) the unit of $(h_*,\h)$ is an isomorphism (\ref{d70}). This contains a graded version of Roby's characterisation of epimorphisms (\cite{roby}). As a corollary we get that coarsening functors preserve epimorphisms of graded rings (\ref{d80}).\medskip

\textbf{Notation.} In general, notation and terminology follow Bourbaki's \textit{\'El\'ements de math\'ematique.} Additionally, we denote by $\ab$ the category of groups and by $\grmod^G(R)$ the category of $G$-graded $R$-modules (for a group $G$ and a $G$-graded ring $R$). Further notation and terminology concerning graded rings and modules follow \cite{cihf}. In particular, for an epimorphism $\psi\colon G\twoheadrightarrow H$ of groups we denote by $\bullet\ps$ the $\psi$-coarsening functor from the category of $G$-graded rings to the category of $H$-graded rings as well as the $\psi$-coarsening functor from $\grmod^G(R)$ to $\grmod^H(R\ps)$ (for a $G$-graded ring $R$).

Throughout the following, we fix an epimorphism of groups $\psi\colon G\twoheadrightarrow H$ and a morphism of $G$-graded rings $h\colon R\rightarrow S$. If we consider $h$ as a morphism of $G$-graded $R$-modules (from $R$ to $h_*(S)$, cf. \ref{b10}), then we denote it by $\hsub$.


\section{Preliminaries on graded modules}

\begin{no}\label{a05}
Even though the notion of adjoint functors is crucial for our investigation, we use only the very modest amount of results from category theory recalled below.\smallskip

A) Let $F\colon\C\rightarrow\D$ and $G\colon\D\rightarrow\C$ be functors, and let $\rho\colon\Id_{\C}\rightarrow G\circ F$ and $\sigma\colon F\circ G\rightarrow\Id_{\D}$ be morphisms of functors. If $(\sigma\circ F)\circ(F\circ\rho)=\Id_F$ and $(G\circ\sigma)\circ(\rho\circ G)=\Id_G$, then there is an adjunction $(F,G)$, and $\rho$ and $\sigma$ are called its unit and its counit (\cite[1.5.3]{ks}).\smallskip

B) Left (or right) adjoint functors are unique up to unique isomorphisms (\cite[1.5.3]{ks}).\smallskip

C) Let $F\colon\C\rightarrow\D$, $F'\colon\D\rightarrow\E$, $G\colon\D\rightarrow\C$ and $G'\colon\E\rightarrow\D$ be functors. If there are adjunctions $(F,G)$ and $(F',G')$, then there is an adjunction $(F'\circ F,G\circ G')$ (\cite[1.5.5]{ks}).\smallskip

D) Let $F\colon\C\rightarrow\D$ be a functor. If $F$ has a left (or right) adjoint, then it commutes with projective (or inductive) limits (\cite[2.1.10]{ks}). The converse holds if $\C$ is an AB5 category with a generator (\cite[9.6.4]{ks}).\smallskip

E) Let $(F,G)$ be an adjunction with unit $\rho$ and counit $\sigma$. If $F$ is faithful then $\rho$ is a monomorphism; if $G$ is faithful then $\sigma$ is an epimorphism (\cite[16.5.3]{schubert2}).\smallskip

F) A functor commutes with projective (or inductive) limits if and only if it is left (or right) exact and commutes with product (or coproducts) (\cite[I.6.4.4]{ilf}).
\end{no}

\begin{no}
The category $\grmod^G(R)$ is abelian, fulfils AB5 and AB4$^*$, and has a projective generator and an injective cogenerator (\cite[A.I.1]{nvo1}). If $g\in G$, then the $g$-shift functor $\bullet(g)\colon\grmod^G(R)\rightarrow\grmod^G(R)$ is an isomorphism of categories (with inverse $\bullet(-g)$) and thus commutes with inductive and projective limits. These basic facts will be used freely throughout the following.
\end{no}

\begin{no}\label{a10}
The $\psi$-coarsening functor $\bullet\ps\colon\grmod^G(R)\rightarrow\grmod^H(R\ps)$ is faithful, conservative, exact, and commutes with inductive limits; it commutes with projective limits if and only if $\ke(\psi)$ is finite (\cite[Proposition 1.2, Theorem 1.3]{cihf}, \ref{a05} D)). In general, for a projective system $F\colon I\rightarrow\grmod^G(R)$ there is a canonical morphism \[(\varprojlim\nolimits_{i\in I}F(i))\ps\rightarrow\varprojlim\nolimits_{i\in I}(F(i)\ps)\] in $\grmod^H(R\ps)$.
\end{no}

\begin{no}\label{a20}
It follows on use of \cite[A.I.2.1]{nvo1} that a morphism $u$ in $\grmod^G(R)$ is a section (or a retraction) if and only if $u\ps$ is so. Therefore, a short exact sequence $\mathbbm{S}$ in $\grmod^G(R)$ is split if and only if the short exact sequence $\mathbbm{S}\ps$ in $\grmod^H(R\ps)$ is split.
\end{no}

\begin{no}\label{a30}
A $G$-graded $R$-module $M$ is called \textit{free (of finite rank)} if $M\cong\bigoplus_{i\in I}R(g_i)$ for some (finite) family $(g_i)_{i\in I}$ in $G$. If $M$ is free (of finite rank) then so is $M\ps$; the converse need not hold (\cite[A.I.2.6.2]{nvo1}).

A $G$-graded $R$-module $M$ is called \textit{of finite type} (or \textit{of finite presentation}) if there is an exact sequence $L\rightarrow M\rightarrow 0$ (or $L'\rightarrow L\rightarrow M\rightarrow 0$) in $\grmod^G(R)$ with $L$ (and $L'$) free of finite rank. This holds if and only if $M\ps$ is of finite type (or of finite presentation). Indeed, $\bullet\ps$ preserves both properties by the first paragraph and \ref{a10}. For the converses we may suppose that $H=0$. If $M_{[0]}$ is of finite type then the set of homogeneous components of the elements of a finite generating set of $M_{[0]}$ is a finite homogeneous generating set of $M$, hence $M$ is of finite type. If $M_{[0]}$ is of finite presentation then by the above there is an epimorphism $p\colon L\twoheadrightarrow M$ in $\grmod^G(R)$ with $L$ free of finite rank, hence $\ke(p)_{[0]}=\ke(p_{[0]})$ is of finite type by \cite[X.1.4 Proposition 6]{a} and \ref{a10}, thus so is $\ke(p)$ by what we have already shown. Therefore, $M$ is of finite presentation.
\end{no}

\begin{no}\label{a40}
A) The $G$-graded tensor product bifunctor \[\bullet\otimes_R\sq\colon\grmod^G(R)^2\rightarrow\grmod^G(R)\] maps a pair $(M,N)$ of $G$-graded $R$-modules to the $G$-graded $R$-module \[M\otimes_RN=\bigoplus_{g\in G}\bigl(\bigoplus_{g'+g''=g}M_{g'}\otimes_{R_0}M_{g''}\bigr)\] and commutes with $\psi$-coarsening, i.e., there is an isomorphism of functors \[(\bullet\otimes_R\sq)\ps\cong(\bullet\ps)\otimes_{R\ps}(\sq\ps).\] For $g\in G$ there are isomorphisms \[(\bullet(g))\otimes_R\sq\cong\bullet\otimes_R(\sq(g))\cong(\bullet\otimes_R\sq)(g).\]

B) A $G$-graded $R$-module $M$ is called \textit{flat} if $M\otimes_R\bullet\colon\grmod^G(R)\rightarrow\grmod^G(R)$ is exact. By \cite[A.I.2.18]{nvo1}, $M$ is flat if and only if $M\ps$ is so.\smallskip

C) A morphism $u\colon M\rightarrow N$ in $\grmod^G(R)$ is called \textit{pure} if $u\otimes_R\bullet\colon M\otimes_R\bullet\rightarrow N\otimes_R\bullet$ is a monomorphism. By \cite[A.I.2.20]{nvo1}, $u$ is pure if and only if $u\ps$ is so.
\end{no}

\begin{no}\label{a50}
A) The $G$-graded Hom bifunctor \[\hm{G}{R}{\bullet}{\sq}\colon\grmod^G(R)^2\rightarrow\grmod^G(R)\] maps a pair $(M,N)$ of $G$-graded $R$-modules to the $G$-graded $R$-module \[\hm{G}{R}{M}{N}=\bigoplus_{g\in G}\hm{}{\grmod^G(R)}{M}{N(g)},\] and there is a canonical monomorphism \[\beta^\psi\colon\hm{G}{R}{\bullet}{\sq}\ps\rightarrowtail\hm{H}{R\ps}{\bullet\ps}{\sq\ps}.\] If $M$ is a $G$-graded $R$-module, then $\beta^\psi_{M,N}$ is an isomorphism for every $G$-graded $R$-module $N$ if and only if $\ke(\psi)$ is finite or $M$ is small (\cite[3.6]{cihf}, cf. \ref{a60} A)). For $g\in G$ there are isomorphisms \[\hm{G}{R}{\bullet(g)}{\sq}\cong\hm{G}{R}{\bullet}{\sq(-g)}\cong\hm{G}{R}{\bullet}{\sq}(-g).\]

B) A $G$-graded $R$-module $M$ is called \textit{projective} if $\hm{}{\grmod^G(R)}{M}{\bullet}\colon\grmod^G(R)\rightarrow\ab$ is exact, and this holds if and only if $\hm{G}{R}{M}{\bullet}\colon\grmod^G(R)\rightarrow\grmod^G(R)$ is exact. Furthermore, $M$ is projective (and of finite type) if and only if it is a direct summand of a free $G$-graded $R$-module (of finite rank). In particular, projective $G$-graded $R$-modules are flat. By \cite[I.2.2]{nvo1}, $M$ is projective if and only if $M\ps$ is so. It follows thus from \ref{a30}, \ref{a40} B), and the corresponding ungraded statement (\cite[X.1.5]{a}), that a $G$-graded $R$-module is flat and of finite presentation if and only if it is projective and of finite type.
\end{no}

\begin{no}\label{a60}
A) Let $M$ be a $G$-graded $R$-module. For a family $\NN=(N_j)_{j\in J}$ of $G$-graded $R$-modules there are canonical monomorphisms \[\lambda_{M,\NN}\colon\bigoplus_{j\in J}\hm{}{\grmod^G(R)}{M}{N_j}\rightarrowtail\hm{}{\grmod^G(R)}{M}{\bigoplus_{j\in J}N_j}\] in $\ab$ and \[\Lambda_{M,\NN}\colon\bigoplus_{j\in J}\hm{G}{R}{M}{N_j}\rightarrowtail\hm{G}{R}{M}{\bigoplus_{j\in J}N_j}\] in $\grmod^G(R)$, both with $(u_j)_{j\in J}\mapsto(x\mapsto(u_j(x))_{j\in J})$ for $u_j\in\hm{}{\grmod^G(R)}{M}{N_j}$ or $u_j\in\hm{G}{R}{M}{N_j}^\hme$, resp., for $j\in J$ and $x\in M^\hme$. The $G$-graded $R$-module $M$ is called \textit{small} if $\lambda_{M,\NN}$ is an isomorphism for every $\NN$, and this holds if and only if $\Lambda_{M,\NN}$ is an isomorphism for every $\NN$. If $M$ is of finite type then it is small, but the converse need not hold (\cite[2$^\circ$, 5$^\circ$]{rentschler}); it does hold if $M$ is projective by \ref{a30}, \ref{a50} B), and the corresponding ungraded statement in the proof of \cite[II.1.2]{bass}. Setting $\NN\ps\dfgl((N_j)\ps)_{j\in J}$ there is a commutative diagram \[\xymatrix@C60pt{(\bigoplus_{j\in J}\hm{G}{R}{M}{N_j})\ps\ar@{ >->}[r]^{(\Lambda_{M,\NN})\ps}\ar@{ >->}[d]&\hm{G}{R}{M}{\bigoplus_{j\in J}N_j}\ps\ar@{ >->}[d]^{\beta^\psi_{M,\bigoplus_{j\in J}N_j}}\\\bigoplus_{j\in J}\hm{H}{R\ps}{M\ps}{(N_j)\ps}\ar@{ >->}[r]^{\Lambda_{M\ps,\NN\ps}}&\hm{H}{R\ps}{M\ps}{\bigoplus_{j\in J}(N_j)\ps}}\] in $\grmod^H(R\ps)$, where the left vertical morphism is induced by $(\beta^\psi_{M,N_j})_{j\in J}$. If $M$ is small or $\ke(\psi)$ is finite, then both vertical morphisms are isomorphisms (\ref{a10}, \ref{a50} A)). Furthermore, $M$ is small if and only if $M\ps$ is so (\cite[3.2]{cihf}).\smallskip

B) The $G$-graded ring $R$ is called \textit{steady} if a $G$-graded $R$-module is small if and only if it is of finite type. If $R$ is noetherian then it is steady, but the converse need not hold (\cite[3.5]{gpmn}, \cite[7$^\circ$, 10$^\circ$]{rentschler}). If $R$ is steady and $h$ is surjective, then $S$ is steady (\cite[1.9]{ct}). If $R\ps$ is steady then so is $R$; the converse holds if $\ke(\psi)$ is finite (\cite[3.3]{cihf}).\smallskip

C) Let $M$ and $N$ be $G$-graded $R$-modules. If $M$ and $N$ are small then so is $M\otimes_RN$ by A), \ref{a40} A) and the corresponding ungraded statement (\cite[1.4]{gpmn}). Conversely, if $M\otimes_RN$ is small then neither $M$ nor $N$ need be small; an (ungraded) counterexample is given by the steady ring $\Z$ and the non-small $\Z$-modules $\Q$ and $(\Z/2\Z)^{\oplus\N}$, for $\Q\otimes_{\Z}(\Z/2\Z)^{\oplus\N}=0$ is small.\smallskip

D) Let $M$ and $N$ be $G$-graded $R$-modules. If $\hm{G}{R}{M}{N}$ is small then neither $M$ nor $N$ need be so. An (ungraded) counterexample is given by the steady ring $\Z$, a finite group $A$, and the non-small $\Z$-modules $A^{\oplus\N}$ and $\Q$, for $\hm{}{\Z}{A^{\oplus\N}}{\Q}\cong\hm{}{\Z}{A}{\Q}^{\N}=0$ is small. Conversely, if $M$ and $N$ are small then $\hm{G}{R}{M}{N}$ need not be small.\footnote{For a positive result see \ref{a140}.} For an (ungraded) counterexample, we consider a field $K$ and the local ring $R=K[(X_i)_{i\in\N}]/\langle X_iX_j\mid i,j\in\N\rangle$ whose maximal ideal $\mathfrak{m}$ is not of finite type. Then, $R$ is steady (\cite[10$^\circ$]{rentschler}, B)) and the $R$-module $M=R\oplus R/\mathfrak{m}$ is small, but $\hm{}{R}{M}{M}\cong R\oplus R/\mathfrak{m}\oplus\mathfrak{m}\oplus R/\mathfrak{m}$ has a direct summand that is not small.
\end{no}

\begin{no}\label{a70}
A) Let $M$ be a $G$-graded $R$-module. For a family $\NN=(N_j)_{j\in J}$ of $G$-graded $R$-modules there is a canonical morphism $\kappa_{M,\NN}\colon(\prod_{j\in J}N_j)\otimes_RM\rightarrow\prod_{j\in J}(N_j\otimes_RM)$ in $\grmod^G(R)$ with $(x_j)_{j\in J}\otimes y\mapsto(x_j\otimes y)_{j\in J}$ for $(x_j)_{j\in J}\in(\prod_{j\in J}N_j)^\hme$ and $y\in M^\hme$ that need be neither a mono- nor an epimorphism (\cite[059I]{stacks}). Setting $\NN\ps\dfgl((N_j)\ps)_{j\in J}$ there is a commutative diagram \[\xymatrix@C50pt@R15pt{((\prod_{j\in J}N_j)\otimes_RM)\ps\ar[r]^{(\kappa_{M,\NN})\ps}\ar[d]&(\prod_{j\in J}(N_j\otimes_RM))\ps\ar[d]\\(\prod_{j\in J}(N_j)\ps)\otimes_{R\ps}M\ps\ar[r]^{\kappa_{\NN\ps,M\ps}}&\prod_{j\in J}((N_j)\ps\otimes_{R\ps}M\ps)}\] in $\grmod^H(R\ps)$ where the vertical morphisms are induced by the canonical ones. If $\ke(\psi)$ is finite then both vertical morphisms are isomorphisms (\ref{a10}, \ref{a40} A)).\smallskip

B) If $M$ is free of finite rank then $\kappa_{M,\NN}$ is an isomorphism for every family $\NN$ of $G$-graded $R$-modules. Indeed, for a finite family $(g_i)_{i\in I}$ in $G$ and a family $\NN=(N_j)_{j\in J}$ of $G$-graded $R$-modules we have a commutative diagram \[\xymatrix@C70pt@R15pt{(\prod_{j\in J}N_j)\otimes_R(\bigoplus_{i\in I}R(g_i))\ar[r]^{\kappa_{L,\NN}}\ar[dd]_{\cong}&\prod_{j\in J}(N_j\otimes_R(\bigoplus_{i\in I}R(g_i)))\ar[d]^{\cong}\\&\prod_{j\in J}\bigoplus_{i\in I}(N_j\otimes_R(R(g_i)))\ar[d]_a^{\cong}\\\bigoplus_{i\in I}((\prod_{j\in J}N_j)\otimes_R(R(g_i)))\ar[r]^{\bigoplus_{i\in I}\kappa_{R(g_i),\NN}}\ar[d]_{\cong}&\bigoplus_{i\in I}\prod_{j\in J}(N_j\otimes_R(R(g_i)))\ar[d]^{\cong}\\\bigoplus_{i\in I}(\prod_{j\in J}N_j)(g_i)\ar[r]^\cong&\bigoplus_{i\in I}\prod_{j\in J}(N_j(g_i))}\] in $\grmod^G(R)$ where the unmarked morphisms are the canonical ones and $a$ is an isomorphism because $J$ is finite (\ref{a40} A)). This yields the claim.
\end{no}

\begin{prop}\label{a90}
Let $M$ be a $G$-graded $R$-module.

a) The following statements are equivalent: (i) $\kappa_{M,\NN}$ is an epimorphism for every family $\NN$ of $G$-graded $R$-modules; (ii) $\kappa_{M,\NN}$ is an epimorphism for every family $\NN$ of flat $G$-graded $R$-modules; (iii) $M$ is of finite type.

b) The following statements are equivalent: (i) $\kappa_{M,\NN}$ is an isomorphism for every family $\NN$ of $G$-graded $R$-modules; (ii) $\kappa_{M,\NN}$ is an isomorphism for every family $\NN$ of flat $G$-graded $R$-modules; (iii) $M$ is of finite presentation.
\end{prop}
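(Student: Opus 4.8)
The plan is to prove each of the two parts as a cycle of implications, treating (i)$\Rightarrow$(ii) as trivial (flat families form a subclass) and concentrating on the two nontrivial arcs: the ``finiteness implies $\kappa$ is nice'' direction (iii)$\Rightarrow$(i), and the converse (ii)$\Rightarrow$(iii). Throughout I would exploit that $\kappa_{M,\NN}$ is natural in $M$, that it is an isomorphism whenever $M$ is free of finite rank (\ref{a70} B)), and that $\grmod^G(R)$ satisfies AB4$^*$, so that arbitrary products are exact and in particular a product of epimorphisms is an epimorphism.

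For (iii)$\Rightarrow$(i) I would pick a presentation and run a diagram chase. In part a) choose an epimorphism $p\colon L\twoheadrightarrow M$ with $L$ free of finite rank (\ref{a30}). Right exactness of $\bullet\otimes_R\sq$ and exactness of products give a commutative square comparing $\kappa_{L,\NN}$ (an isomorphism) with $\kappa_{M,\NN}$; since $(\prod_jN_j)\otimes_Rp$ and $\prod_j(N_j\otimes_Rp)$ are epimorphisms, a one-line chase forces $\kappa_{M,\NN}$ to be an epimorphism. In part b) choose a presentation $L'\to L\to M\to 0$ with $L,L'$ free of finite rank; both rows of the resulting ladder are right exact, the outer two vertical maps $\kappa_{L',\NN}$ and $\kappa_{L,\NN}$ are isomorphisms, and a four-lemma argument (surjectivity and injectivity chased separately) yields that $\kappa_{M,\NN}$ is an isomorphism.

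For the converse (ii)$\Rightarrow$(iii), which is the crux, I would test the hypothesis against a carefully chosen flat family. For part a), index by the homogeneous elements of $M$: for each $x\in M^\hme$ of degree $g_x$ set $N_x=R(g_x)$, so that $N_x$ is free, $N_x\otimes_RM\cong M(g_x)$, and $1\otimes x$ is homogeneous of degree $0$ and corresponds to $x$ (\ref{a40} A)). The tuple $\zeta=(1\otimes x)_{x\in M^\hme}$ is then a homogeneous degree-$0$ element of $\prod_{x}(N_x\otimes_RM)$. By hypothesis $\kappa_{M,\NN}$ is a (homogeneous) epimorphism, so $\zeta$ lifts to some $\omega=\sum_k a_k\otimes m_k$ with finitely many homogeneous $m_k\in M^\hme$; reading off each coordinate of $\kappa_{M,\NN}(\omega)=\zeta$ gives $x=\sum_k(a_k)_xm_k$ for every homogeneous $x$, whence $M=\sum_kRm_k$ is of finite type. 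For part b), first note that (ii) forces the part-a) hypothesis, so $M$ is already of finite type and there is a short exact sequence $0\to K\to L\to M\to 0$ with $L$ free of finite rank; the point is to prove $K$ of finite type. Here flatness of the $N_j$ is what makes $0\to N_j\otimes_RK\to N_j\otimes_RL\to N_j\otimes_RM\to 0$ exact, so after taking (exact) products the bottom row of the comparison ladder is exact on the left as well; with $\kappa_{L,\NN}$ and $\kappa_{M,\NN}$ isomorphisms, a chase shows $\kappa_{K,\NN}$ is an epimorphism for every flat $\NN$, and part a) then gives that $K$ is of finite type, i.e.\ that $M$ is of finite presentation.

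The main obstacle is precisely the construction in (ii)$\Rightarrow$(iii): choosing the test family $\NN=(R(g_x))_{x\in M^\hme}$ and the diagonal element $\zeta$ so that surjectivity of $\kappa_{M,\NN}$ pins all homogeneous elements of $M$ into a single finitely generated submodule. The remaining work is bookkeeping with degrees and shifts, together with the verification that products are exact, so that the diagram chases in (iii)$\Rightarrow$(i) and in the finite-presentation step go through.
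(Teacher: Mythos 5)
Your proposal is correct and follows essentially the same route as the paper: (i)$\Rightarrow$(ii) is treated as trivial, (iii)$\Rightarrow$(i) is done by comparing $\kappa_{M,\NN}$ with $\kappa_{L,\NN}$ along a finite free presentation using \ref{a70} B), exactness of products, and a Snake/Five-Lemma-type chase, and (ii)$\Rightarrow$(iii) is done by testing $\kappa_{M,\NN}$ against a flat family indexed by $M^\hme$ and lifting a diagonal element to extract a finite generating set (then, for b), applying a) to the kernel $K$). Your one deviation is minor but welcome: you use the shifted family $(R(g_x))_{x\in M^\hme}$ so that the diagonal tuple $(1\otimes x)_x$ is genuinely a homogeneous degree-zero element of the categorical (graded) product, whereas the paper works with the constant family $(R)_{m\in M^\hme}$ and the tuple $(m)_{m\in M^\hme}$, leaving that degree bookkeeping implicit.
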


\begin{proof}
a) Suppose that (ii) holds, so that the map \[R^{M^\hme}\otimes_RM\rightarrow M^{M^\hme},\;\sum_{j=1}^n((b^{(j)}_m)_{m\in M^\hme}\otimes a^{(j)})\mapsto(b_m^{(j)}a^{(j)})_{m\in M^\hme}\] is surjective. There exist $n\in\N$, $b^{(j)}_m\in R$ for $j\in[1,n]$ and $m\in M^\hme$, and $a^{(j)}\in M$ for $j\in[1,n]$ with $(m)_{m\in M^\hme}=(\sum_{j=1}^nb^{(j)}_ma^{(j)})_{m\in M^\hme}$. Hence, if $m\in M^\hme$ then $m=\sum_{j=1}^nb^{(j)}_ma^{(j)}$. This shows that $M=\langle a^{(j)}\mid j\in[1,n]\rangle_R$, thus (iii) holds.

Suppose that (iii) holds, so that we have an epimorphism $L\twoheadrightarrow M$ in $\grmod(R)$ with $L$ free of finite rank. Let $\NN=(N_j)_{j\in J}$ be a family of $G$-graded $R$-modules. By \ref{a70} B) we have a commutative diagram \[\xymatrix{(\prod_{j\in J}N_j)\otimes_RL\ar@{->>}[r]\ar[d]^\cong_{\kappa_{L,\NN}}&(\prod_{j\in J}N_j)\otimes_RM\ar[d]^{\kappa_{M,\NN}}\\\prod_{j\in J}(N_j\otimes_RL)\ar@{->>}[r]&\prod_{j\in J}(N_j\otimes_RM)}\] in $\grmod^G(R)$, implying that $\kappa_{M,\NN}$ is an epimorphism and thus (i).

b) Suppose that (ii) holds. By a) there is an exact sequence \[0\longrightarrow K\longrightarrow L\longrightarrow M\longrightarrow 0\] in $\grmod^G(R)$ with $L$ free of finite rank. Let $\NN=(N_j)_{j\in J}$ be a family of flat $G$-graded $R$-modules. By (ii) and a) we have a commutative diagram with exact rows \[\xymatrix{&(\prod_{j\in J}N_j)\otimes_RK\ar[r]\ar[d]_{\kappa_{K,\NN}}&(\prod_{j\in J}N_j)\otimes_RL\ar[r]\ar@{->>}[d]_{\kappa_{L,\NN}}&(\prod_{j\in J}N_j)\otimes_RM\ar[r]\ar[d]^\cong_{\kappa_{M,\NN}}&0\\0\ar[r]&\prod_{j\in J}(N_j\otimes_R K)\ar[r]&\prod_{j\in J}(N_j\otimes_R L)\ar[r]&\prod_{j\in J}(N_j\otimes_R M)\ar[r]&0}\] in $\grmod^G(R)$. The Snake Lemma (\cite[010H]{stacks}) implies that $\kappa_{K,\NN}$ is an epimorphism, hence $K$ is of finite type by a), and thus (iii) holds.

Suppose that (iii) holds, so that we have an exact sequence $L'\longrightarrow L\longrightarrow M\rightarrow 0$ in $\grmod^G(R)$ with $L$ and $L'$ free of finite rank. Let $\NN=(N_j)_{j\in J}$ be a family of $G$-graded $R$-modules. By \ref{a70} B) we have a commutative diagram with exact rows \[\xymatrix{(\prod_{j\in J}N_j)\otimes_RL'\ar[r]\ar[d]^\cong_{\kappa_{L',\NN}}&(\prod_{j\in J}N_j)\otimes_RL\ar[r]\ar[d]^\cong_{\kappa_{L,\NN}}&(\prod_{j\in J}N_j)\otimes_RM\ar[r]\ar[d]_{\kappa_{M,\NN}}&0\\\prod_{j\in J}(N_j\otimes_RL')\ar[r]&\prod_{j\in J}(N_j\otimes_RL)\ar[r]&\prod_{j\in J}(N_j\otimes_RM)\ar[r]&0}\] in $\grmod^G(R)$. The Five Lemma (\cite[05QB]{stacks}) implies that $\kappa_{M,\NN}$ is an isomorphism and thus (i).
\end{proof}

\begin{cor}
Let $M$ be a $G$-graded $R$-module. Then, $\kappa_{M,\NN}$ is an epimorphism (or isomorphism) for every family $\NN$ of $G$-graded $R$-modules if and only if $\kappa_{M\ps,\NN}$ is so for every family $\NN$ of $H$-graded $R\ps$-modules.
\end{cor}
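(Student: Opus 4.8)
The plan is to deduce this immediately from Proposition~\ref{a90} together with the transfer of finiteness conditions under coarsening recorded in~\ref{a30}, so that the corollary reduces to a short chain of equivalences. The essential point to notice is that Proposition~\ref{a90} is stated and proved for an arbitrary group and an arbitrary graded ring over it, and hence applies verbatim with $H$ in place of $G$ and the $H$-graded ring $R\ps$ in place of $R$. Applied to the $H$-graded $R\ps$-module $M\ps$, it yields that $\kappa_{M\ps,\NN}$ is an epimorphism (respectively an isomorphism) for every family $\NN$ of $H$-graded $R\ps$-modules if and only if $M\ps$ is of finite type (respectively of finite presentation).

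For the epimorphism case I would then argue as follows. By~\ref{a90}~a), the condition that $\kappa_{M,\NN}$ be an epimorphism for every family $\NN$ of $G$-graded $R$-modules is equivalent to $M$ being of finite type. By~\ref{a30} this holds if and only if $M\ps$ is of finite type, and by the application of~\ref{a90}~a) to $R\ps$ described above, the latter is equivalent to $\kappa_{M\ps,\NN}$ being an epimorphism for every family $\NN$ of $H$-graded $R\ps$-modules. Composing these three equivalences gives the claim. The isomorphism case is entirely analogous: one replaces~\ref{a90}~a) by~\ref{a90}~b) and ``of finite type'' by ``of finite presentation'' throughout, invoking the corresponding equivalence in~\ref{a30}.

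I expect no serious obstacle here; the work is purely organisational. The one point deserving a moment's care is to resist the tempting but incorrect route of trying to read the statement off the commutative square in~\ref{a70}~A) relating $(\kappa_{M,\NN})\ps$ and $\kappa_{\NN\ps,M\ps}$, since its vertical morphisms are known to be isomorphisms only when $\ke(\psi)$ is finite, which we do not assume. Routing the argument instead through the intrinsic characterisations in~\ref{a90} and the coarsening-invariance of finite type and finite presentation in~\ref{a30} avoids this finiteness hypothesis entirely, and it is precisely this detour that makes the corollary hold in full generality.
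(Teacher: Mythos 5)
Your proposal is correct and is precisely the paper's own argument: the paper's proof reads ``Immediately from \ref{a30} and \ref{a90}'', i.e.\ it chains the characterisations of \ref{a90} (epimorphism $\Leftrightarrow$ finite type, isomorphism $\Leftrightarrow$ finite presentation, applied both over $R$ and over $R\ps$) with the coarsening-invariance of these finiteness conditions from \ref{a30}, exactly as you do. Your remark about avoiding the diagram in \ref{a70}~A) is a sensible observation but not needed beyond what you already wrote.
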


\begin{proof}
Immediately from \ref{a30} and \ref{a90}.
\end{proof}

\begin{no}
For $G$-graded $R$-modules $L$, $M$ and $N$ there is a morphism \[\tau_{L,M,N}\colon L\otimes_R\hm{G}{R}{M}{N}\rightarrow\hm{G}{R}{\hm{G}{R}{L}{M}}{N}\] in $\grmod^G(R)$ with $x\otimes u\mapsto(v\mapsto u(v(x)))$ for $x\in L^\hme$, $u\in\hm{G}{R}{M}{N}^\hme$ and $v\in\hm{G}{R}{L}{M}^\hme$ that is natural in $L$, $M$ and $N$. In particular, for a $G$-graded $R$-module $L$ the morphism $\tau_{L,R,R}$ composed with the canonical isomorphism $L\cong L\otimes_R\hm{G}{R}{R}{R}$ yields a morphism \[\tau_L\colon L\rightarrow\hm{G}{R}{\hm{G}{R}{L}{R}}{R},\;x\mapsto(u\mapsto u(x))\] in $\grmod^G(R)$ that is natural in $L$.
\end{no}

\begin{prop}\label{a120}
Let $L$, $M$ and $N$ be $G$-graded $R$-modules. If $L$ is projective then $\tau_{L,M,N}$ is a monomorphism. If $L$ is projective and of finite type then $\tau_{L,M,N}$ is an isomorphism.\footnote{For $G=0$ this generalises \cite[II.4 Exercice 6 b)]{a}.}
\end{prop}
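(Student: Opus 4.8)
The plan is to exploit the naturality of $\tau_{L,M,N}$ in $L$ and to reduce everything, through direct summands, to the case of free modules, where $\tau$ can be computed by hand. Throughout I abbreviate $H\dfgl\hm{G}{R}{M}{N}$ and write $\Phi(\bullet)\dfgl\hm{G}{R}{\hm{G}{R}{\bullet}{M}}{N}$ for the (covariant, being doubly contravariant) functor that occurs as the target of $\tau_{\bullet,M,N}$; naturality then reads $\tau_{L',M,N}\circ(f\otimes_RH)=\Phi(f)\circ\tau_{L,M,N}$ for every morphism $f\colon L\rightarrow L'$. I would first settle the base cases: for $L=R$ the canonical isomorphisms $R\otimes_RH\cong H$ and $\hm{G}{R}{R}{M}\cong M$ turn $\tau_{R,M,N}$ into the identity of $H$, so it is an isomorphism, and the same computation, read through the shift isomorphisms of \ref{a40} A) and \ref{a50} A), shows that $\tau_{R(g),M,N}$ is an isomorphism for every $g\in G$.

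Next I would check that $\tau$ is compatible with finite direct sums in $L$. Since $\bullet\otimes_RH$ commutes with direct sums, and since both the inner and the outer Hom turn a finite coproduct in their contravariant argument into a finite product (hence, being finite, into a coproduct), naturality together with the explicit formula for $\tau$ identifies $\tau_{L_1\oplus L_2,M,N}$ with $\tau_{L_1,M,N}\oplus\tau_{L_2,M,N}$. By induction $\tau_{L,M,N}$ is therefore an isomorphism whenever $L$ is free of finite rank. This already yields the second assertion: a projective $G$-graded $R$-module of finite type is a direct summand of a free one of finite rank (\ref{a50} B)), so choosing $s\colon L\rightarrow F$ and $r\colon F\rightarrow L$ with $r\circ s=\Id_L$, naturality exhibits $\tau_{L,M,N}$ as a retract of the isomorphism $\tau_{F,M,N}$ in the arrow category, and a retract of an isomorphism is an isomorphism.

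For the first assertion the remaining and, I expect, principal difficulty is a free module $F=\bigoplus_{i\in I}R(g_i)$ of arbitrary, possibly infinite, rank: here the finite-sum argument breaks down, since neither Hom commutes with infinite coproducts in the way the tensor product does, so the target $\Phi(F)$ is genuinely a product-type object and cannot be analysed summand by summand. The device I would use is that every element $z$ of $F\otimes_RH$ is a finite sum of elementary tensors and hence, as $\bullet\otimes_RH$ commutes with direct sums, lies in the image of $F_0\otimes_RH$ for some finite-rank free direct summand $\iota\colon F_0\hookrightarrow F$. Because $\iota$ is split, $\hm{G}{R}{\iota}{M}$ is a retraction, so $\Phi(\iota)$ is a section and in particular a monomorphism; writing $z=(\iota\otimes_RH)(z_0)$ and using naturality, $\tau_{F,M,N}(z)=\Phi(\iota)(\tau_{F_0,M,N}(z_0))=0$ forces $\tau_{F_0,M,N}(z_0)=0$, whence $z_0=0$ by the finite-rank case and $z=0$. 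Thus $\tau_{F,M,N}$ is a monomorphism.

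Finally, a general projective $L$ is a direct summand of such a free module $F$, and with $s,r$ as above the naturality identity $\Phi(s)\circ\tau_{L,M,N}=\tau_{F,M,N}\circ(s\otimes_RH)$ has a monomorphism on its right-hand side, being the composite of the split monomorphism $s\otimes_RH$ with the monomorphism $\tau_{F,M,N}$; hence $\Phi(s)\circ\tau_{L,M,N}$, and therefore $\tau_{L,M,N}$ itself, is a monomorphism. A small point to keep honest is that all these element-level arguments are legitimate because $\grmod^G(R)$ is a concrete abelian module category, so monomorphisms may be tested on (homogeneous) elements.
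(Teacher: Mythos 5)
Your proof is correct, and its overall skeleton coincides with the paper's: additivity of $\tau_{\bullet,M,N}$ under finite direct sums, reduction from projective modules to free ones via direct summands and retracts, and reduction of the finite-rank free case to $L=R$ (with shifts), which settles the isomorphism claim exactly as in the paper. The genuine divergence is in the crucial step, the monomorphy of $\tau_{F,M,N}$ for a free module $F=\bigoplus_{i\in I}R(g_i)$ of possibly infinite rank. The paper handles this with a single commutative diagram that identifies $\tau_{F,M,N}$, up to canonical isomorphisms, with the canonical monomorphism $\bigoplus_{i\in I}\hm{G}{R}{M}{N(g_i)}\rightarrowtail\prod_{i\in I}\hm{G}{R}{M}{N(g_i)}$; the point is that $\hm{G}{R}{\bullet}{M}$ converts the coproduct $\bigoplus_{i\in I}R(g_i)$ into the product $\prod_{i\in I}M(-g_i)$, while $\hm{G}{R}{\bullet}{N}$ converts $\bigoplus_{i\in I}M(-g_i)$ back into $\prod_{i\in I}\hm{G}{R}{M}{N(g_i)}$, so that a composite whose first factor is $\tau_{F,M,N}$ is exhibited as a monomorphism, whence $\tau_{F,M,N}$ is one. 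You instead argue elementwise: every element of $F\otimes_R\hm{G}{R}{M}{N}$ comes from a finite sub-sum $F_0$, the split monomorphism $\iota\colon F_0\rightarrow F$ induces a split (in particular injective) morphism $\Phi(\iota)$, and naturality reduces injectivity to the finite-rank case already settled. Both arguments are valid; testing monomorphisms on elements is legitimate in $\grmod^G(R)$, as you note. What the paper's route buys is a structural identification of $\tau_{F,M,N}$ with the familiar coproduct-into-product comparison map, which also makes transparent why no isomorphism can be expected in infinite rank; what your route buys is economy, since you never need to compute $\hm{G}{R}{F}{M}$ for infinite-rank $F$ or manipulate infinite products at all, only finite-rank data together with naturality and splitness.
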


\begin{proof}
Let $L'$ be a further $G$-graded $R$-module. Setting $P\dfgl\hm{G}{R}{M}{N}$, $Q\dfgl\hm{G}{R}{L}{M}$ and $Q'\dfgl\hm{G}{R}{L'}{M}$, there is a commutative diagram \[\xymatrix@C80pt@R15pt{
(L\oplus L')\otimes_RP\ar[r]^(.42){\tau_{L\oplus L',M,N}}\ar[d]_\cong&\hm{G}{R}{\hm{G}{R}{L\oplus L'}{M}}{N}\ar[d]^\cong\\
(L\otimes_RP)\oplus(L'\otimes_RP)\ar[r]^(.46){\tau_{L,M,N}\oplus\tau_{L',M,N}}&\hm{G}{R}{Q}{N}\oplus\hm{G}{R}{Q'}{N}}\] in $\grmod^G(R)$ where the vertical morphisms are the canonical ones. Thus, $\tau_{L\oplus L',M,N}$ is an isomorphism if and only if $\tau_{L,M,N}$ and $\tau_{L',M,N}$ are so. So, for the first (or second) claim we can henceforth suppose that $L$ is free (of finite rank) (\ref{a50} B)). If $L=\bigoplus_{i\in I}R(g_i)$ for a family $(g_i)_{i\in I}$ in $G$, then there is a commutative diagram \[\xymatrix@C50pt@R15pt{(\bigoplus_{i\in I}R(g_i))\otimes_R\hm{G}{R}{M}{N}\ar[r]^(.48){\tau_{L,M,N}}\ar[d]_\cong&\hm{G}{R}{\hm{G}{R}{\bigoplus_{i\in I}R(g_i)}{M}}{N}\ar[d]^\cong\\\bigoplus_{i\in I}(R(g_i)\otimes_R\hm{G}{R}{M}{N})\ar[d]_\cong&\hm{G}{R}{\prod_{i\in I}\hm{G}{R}{R(g_i)}{M}}{N}\ar[d]^\cong\\\bigoplus_{i\in I}\hm{G}{R}{M}{N(g_i)})\ar@{ >->}[d]&\hm{G}{R}{\prod_{i\in I}M(-g_i)}{N}\ar@{ ->>}[d]\\\prod_{i\in I}\hm{G}{R}{M}{N(g_i)})\ar[r]^\cong&\hm{G}{R}{\bigoplus_{i\in I}M(-g_i)}{N}}\] in $\grmod^G(R)$ where the unmarked morphisms are the canonical ones (\ref{a40} A), \ref{a50} A)), implying that $\tau_{L,M,N}$ is a monomorphism. For the second claim we can by \ref{a40} A) suppose that $L=R$, and then it is clear.
\end{proof}

\begin{cor}\label{a130}
Let $M$ be a $G$-graded $R$-module. If $M$ is projective then $\tau_M$ is a monomorphism. If $M$ is projective and of finite type then $\tau_M$ is an isomorphism.
\end{cor}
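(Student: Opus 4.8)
The plan is to deduce this directly from Proposition \ref{a120} by specialising both the middle and the final argument to $R$. Recall that, by construction, $\tau_M$ is the composite of the morphism $\tau_{M,R,R}$ with the canonical isomorphism $M\cong M\otimes_R\hm{G}{R}{R}{R}$. Since precomposition with an isomorphism preserves the property of being a monomorphism or an isomorphism, it suffices to establish the corresponding assertions for $\tau_{M,R,R}$ and then transport them along this identification.

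To do so, I would apply \ref{a120} with the module $M$ of the corollary playing the role of the first argument $L$ there, and with both remaining arguments equal to $R$. If $M$ is projective, the first part of the proposition gives that $\tau_{M,R,R}$ is a monomorphism, and hence $\tau_M$ is a monomorphism as well. If $M$ is moreover of finite type, the second part of the proposition gives that $\tau_{M,R,R}$ is an isomorphism, whence $\tau_M$ is an isomorphism.

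There is no real obstacle here: the entire content of the statement already resides in \ref{a120}, the corollary being merely the instance $M=N=R$ of that proposition, reformulated via the canonical identification $M\cong M\otimes_R\hm{G}{R}{R}{R}$. The only point to keep in mind is that this identification is exactly the isomorphism used in the definition of $\tau_M$, so that $\tau_M$ and $\tau_{M,R,R}$ differ only by an isomorphism and therefore share the relevant exactness properties.
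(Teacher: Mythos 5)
Your proof is correct and is exactly the paper's argument: the paper's proof reads ``Immediately from \ref{a120}'', i.e.\ specialise $\tau_{L,M,N}$ to $\tau_{M,R,R}$ and transport the mono-/isomorphism property along the canonical identification $M\cong M\otimes_R\hm{G}{R}{R}{R}$ used in the definition of $\tau_M$. You have simply spelled out the details left implicit there.
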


\begin{proof}
Immediately from \ref{a120}.
\end{proof}

\begin{prop}\label{a140}
Let $M$ be a projective $G$-graded $R$-module of finite type.

a) If $N$ is a $G$-graded $R$-module that is small (or of finite type), then so is\linebreak $\hm{G}{R}{M}{N}$.

b) $\hm{G}{R}{M}{R}$ is projective and of finite type.
\end{prop}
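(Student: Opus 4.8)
The plan is to reduce both claims to the case of a free module of finite rank by writing $M$ as a retract of such a module, and then to compute the relevant Hom explicitly.

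Since $M$ is projective and of finite type, \ref{a50} B) furnishes a free $G$-graded $R$-module of finite rank $L=\bigoplus_{i\in I}R(g_i)$, with $I$ finite and $(g_i)_{i\in I}$ a family in $G$, together with morphisms $M\rightarrow L\rightarrow M$ composing to $\Id_M$. The contravariant functor $\hm{G}{R}{\bullet}{N}$ is additive, so applying it exhibits $\hm{G}{R}{M}{N}$ as a direct summand (retract) of $\hm{G}{R}{L}{N}$, for every $G$-graded $R$-module $N$. Next I would compute $\hm{G}{R}{L}{N}$: the universal property of the direct sum identifies Hom out of $L$ with the product $\prod_{i\in I}\hm{G}{R}{R(g_i)}{N}$, the shift isomorphisms of \ref{a50} A) together with the canonical isomorphism $\hm{G}{R}{R}{\bullet}\cong\bullet$ identify each factor with $N(-g_i)$, and finiteness of $I$ turns the product into a finite direct sum. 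This yields \[\hm{G}{R}{L}{N}\cong\bigoplus_{i\in I}N(-g_i),\] of which $\hm{G}{R}{M}{N}$ is a direct summand.

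For a) it then suffices to observe that smallness and finite type are each preserved by shifts (the shift functors being isomorphisms of categories), and that each is closed under finite direct sums and under passage to direct summands. Granting this, if $N$ is small (resp.\ of finite type), then so is each $N(-g_i)$, hence so is the finite direct sum $\bigoplus_{i\in I}N(-g_i)\cong\hm{G}{R}{L}{N}$, and therefore so is its retract $\hm{G}{R}{M}{N}$. For b) I would specialise the computation to $N=R$: here $\hm{G}{R}{L}{R}\cong\bigoplus_{i\in I}R(-g_i)$ is free of finite rank, so $\hm{G}{R}{M}{R}$, being a direct summand of it, is projective and of finite type by \ref{a50} B). (In particular this reproves the finite-type assertion of a) in the case $N=R$.)

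The argument is essentially bookkeeping, and the only points demanding care are the closure properties invoked in a). For smallness these follow from the additivity of the comparison maps $\lambda_{\bullet,\NN}$ of \ref{a60}: under the canonical identifications, $\lambda_{A\oplus B,\NN}$ decomposes as $\lambda_{A,\NN}\oplus\lambda_{B,\NN}$, so it is an isomorphism for all $\NN$ if and only if both $\lambda_{A,\NN}$ and $\lambda_{B,\NN}$ are, which gives closure under finite direct sums and under direct summands at once. For finite type the finite-direct-sum case is immediate, and the direct-summand case uses that a summand is a quotient (via the projection) of a module of finite type. Everything else is formal, so I expect no genuine obstacle here.
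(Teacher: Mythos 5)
Your proof is correct, and for the finite-type half of a) and for b) it coincides with the paper's argument: there too, $M$ is written as a direct summand of a free module $L=\bigoplus_{i\in I}R(g_i)$ of finite rank, and $\hm{G}{R}{M}{N}$ is exhibited as a direct summand of $\hm{G}{R}{L}{N}\cong\bigoplus_{i\in I}N(-g_i)$. Where you genuinely diverge is on the smallness half of a). The paper does not use the retract reduction there; instead it invokes the natural transformation $\tau$ and Proposition \ref{a120}: it places $\Lambda_{\hm{G}{R}{M}{N},\LL}$ in a commutative square whose other three sides, namely $\bigoplus_{j\in J}\tau_{M,N,L_j}$ and $\tau_{M,N,\bigoplus_{j\in J}L_j}$ (isomorphisms by \ref{a120} because $M$ is projective and of finite type) and $M\otimes_R\Lambda_{N,\LL}$ (an isomorphism because $N$ is small), force it to be an isomorphism. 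Your route replaces this by the closure of smallness under shifts, finite direct sums and direct summands, which you correctly justify via the additivity and naturality of $\lambda$ in its first variable, together with the standard fact that a direct sum of two morphisms is an isomorphism if and only if both summands are; combined with the observation that a quotient (hence a summand) of a module of finite type is of finite type, this makes your treatment of a) and b) completely uniform. What each approach buys: yours is more elementary and self-contained, handling smallness, finite type and projectivity by one and the same decomposition without any appeal to the tensor-Hom machinery of \ref{a120}; the paper's version capitalises on Proposition \ref{a120}, which it has already proved and uses elsewhere, and it displays the smallness of $\hm{G}{R}{M}{N}$ directly through the comparison morphism $\Lambda_{\hm{G}{R}{M}{N},\LL}$, in the same diagrammatic style as the coarsening comparisons that follow in the paper. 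Neither argument is stronger than the other in substance; I see no gap in yours.
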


\begin{proof}
a) Let $\LL=(L_j)_{j\in J}$ be a family of $G$-graded $R$-modules. There is a commutative diagram \[\xymatrix@C70pt{\bigoplus_{j\in J}(M\otimes_R\hm{G}{R}{N}{L_j})\ar[r]^(.48){\bigoplus_{j\in J}\tau_{M,N,L_j}}\ar[d]_\cong&\bigoplus_{j\in J}\hm{G}{R}{\hm{G}{R}{M}{N}}{L_j}\ar@{ >->}[dd]^{\Lambda_{\hm{G}{R}{M}{N},\LL}}\\M\otimes_R(\bigoplus_{j\in J}\hm{G}{R}{N}{L_j})\ar[d]_{M\otimes_R\Lambda_{N,\LL}}&\\M\otimes_R\hm{G}{R}{N}{\bigoplus_{j\in J}L_j}\ar[r]^(.47){\tau_{M,N,\bigoplus_{j\in J}L_j}}&\hm{G}{R}{\hm{G}{R}{M}{N}}{\bigoplus_{j\in J}L_j}}\] in $\grmod^G(R)$ where the unmarked morphism is the canonical one. If $M$ is projective and of finite type and $N$ is small, then both horizontal morphisms and $M\otimes_R\Lambda_{N,\LL}$ are isomorphisms (\ref{a120}), hence $\Lambda_{\hm{G}{R}{M}{N},\LL}$ is an isomorphism, too, and thus $\hm{G}{R}{M}{N}$ is small. If $N$ is additionally of finite type then there exists a finite family $(g_i)_{i\in I}$ in $G$ such that $M$ is a direct summand of $\bigoplus_{i\in I}R(g_i)$ (\ref{a50} B)), hence $\hm{G}{R}{M}{N}$ is a direct summand of $\hm{G}{R}{\bigoplus_{i\in I}R(g_i)}{N}\cong\bigoplus_{i\in I}N(-g_i)$ (\ref{a50} A)), and as this $G$-graded $R$-module is of finite type, the same holds for $\hm{G}{R}{M}{N}$.

b) As in a) we see that $\hm{G}{R}{M}{R}$ is a direct summand of the free $G$-graded $R$-module $\bigoplus_{i\in I}R(-g_i)$ and thus projective.
\end{proof}

\begin{no}
A) For $G$-graded $R$-modules $L$, $M$ and $N$ there is a commutative diagram \[\xymatrix@C70pt{(L\otimes_R\hm{G}{R}{M}{N})\ps\ar[r]^(.48){(\tau_{L,M,N})\ps}\ar[d]_\cong&\hm{G}{R}{\hm{G}{R}{L}{M}}{N}\ps\ar@{ >->}[d]^{\beta^\psi_{\hm{G}{R}{L}{M},N}}\\L\ps\otimes_{R\ps}\hm{G}{R}{M}{N}\ps\ar[d]_{L\ps\otimes_{R\ps}\beta^\psi_{M,N}}&\hm{H}{R\ps}{\hm{G}{R}{L}{M}\ps}{N\ps}\\L\ps\otimes_{R\ps}\hm{H}{R\ps}{M\ps}{N\ps}\ar[r]^(.46){\tau_{L\ps,M\ps,N\ps}}&\hm{H}{R\ps}{\hm{H}{R\ps}{L\ps}{M\ps}}{N\ps}.\ar[u]_{\hm{H}{R\ps}{\beta^\psi_{L,M}}{N\ps}}}\] in $\grmod^H(R\ps)$, where the unmarked morphism is the canonical one. If $\ke(\psi)$ is finite, or if $L$ is projective and of finite type and $M$ is small, then all the vertical morphisms are isomorphisms (\ref{a40} A), \ref{a50} A), \ref{a60} A), \ref{a140} a)).\smallskip

B) For a $G$-graded $R$-module $M$ there is a commutative diagram \[\xymatrix@C90pt{M\ps\ar[d]_{\tau_{M\ps}}\ar[r]^(.4){(\tau_M)\ps}&\hm{G}{R}{\hm{G}{R}{M}{R}}{R}\ps\ar@{ >->}[d]^{\beta^\psi_{\hm{G}{R}{M}{R},R}}\\\hm{H}{R\ps}{\hm{H}{R\ps}{M\ps}{R\ps}}{R\ps}\ar@{ >->}[r]^(.52){\hm{H}{R\ps}{\beta^\psi_{M,R}}{R\ps}}&\hm{H}{R\ps}{\hm{G}{R}{M}{R}\ps}{R\ps}}\] in $\grmod^H(R\ps)$. If $\ke(\psi)$ is finite, or if $M$ is projective and of finite type, then the right vertical and the lower horizontal morphism are isomorphisms (\ref{a50} A), \ref{a60} A), \ref{a140} a)).
\end{no}


\section{Change of ring functors}

\begin{no}\label{b10}
For a $G$-graded $S$-module $M$ we define a $G$-graded $R$-module $h_*(M)$ as follows: Its underlying additive group and its $G$-graduation are those of $M$; its $R$-scalar multiplication is given by $rx=h(r)x$ for $r\in R^\hme$ and $x\in M^\hme$, where the right side product is the $S$-scalar multiplication of $M$. If $u\colon M\rightarrow N$ is a morphism in $\grmod^G(S)$, then its underlying map defines a morphism $h_*(M)\rightarrow h_*(N)$ in $\grmod^G(R)$, denoted by $h_*(u)$. These definitions give rise to a faithful and conservative functor \[h_*\colon\grmod^G(S)\rightarrow\grmod^G(R),\] called \textit{scalar restriction (from $S$ to $R$) by means of $h$.} It is clear from the above that scalar restriction by means of $h$ commutes with $\psi$-coarsening, i.e., \[h_*(\bullet)\ps=(h\ps)_*(\bullet\ps).\]
\end{no}

\begin{no}\label{b20}
A) For a $G$-graded $S$-module $M$ and a $G$-graded $R$-module $N$ we define a $G$-graded $S$-module $M\otimes_RN$ as follows: Its underlying additive group and its $G$-graduation are those of $h_*(M)\otimes_RN$; its $S$-scalar multiplication is given by $s(x\otimes y)=(sx)\otimes y$ for $s\in S^\hme$, $x\in M^\hme$ and $y\in N^\hme$. If $u\colon M\rightarrow M'$ is a morphism in $\grmod^G(S)$ and $v\colon N\rightarrow N'$ is a morphism in $\grmod^G(R)$, then the map underlying \[h_*(u)\otimes v\colon h_*(M)\otimes_RN\rightarrow h_*(M')\otimes_RN'\] is $S$-linear and thus defines a morphism $M\otimes_RN\rightarrow M'\otimes_RN$ in $\grmod^G(S)$, denoted by $u\otimes v$. These definitions give rise to a bifunctor \[\bullet\otimes_R\sq\colon\grmod^G(S)\times\grmod^G(R)\rightarrow\grmod^G(S)\] with $h_*(\bullet\otimes_R\sq)=h_*(\bullet)\otimes_R\sq$. As $G$-graded tensor products commute with $\psi$-coarsening (\ref{a40} A)) it follows that the same holds for the above bifunctor, i.e., there is an isomorphism \[(\bullet\otimes_R\sq)\ps\cong(\bullet\ps)\otimes_{R\ps}(\sq\ps).\]

B) Taking $M=S$ in A) we get a functor \[h^*(\bullet)\dfgl S\otimes_R\bullet\colon\grmod^G(R)\rightarrow\grmod^G(S)\] with $h_*(h^*(\bullet))=h_*(S)\otimes_R\bullet$, called \textit{scalar extension (from $R$ to $S$) by means of $h$.} It follows from A) that $h^*$ commutes with $\psi$-coarsening, i.e., there is an isomorphism \[h^*(\bullet)\ps\cong(h\ps)^*(\bullet\ps).\] Moreover, there is a canonical isomorphism $h^*(R)\cong S$ in $\grmod^G(S)$.
\end{no}

\begin{no}\label{b30}
A) For a $G$-graded $S$-module $M$ and a $G$-graded $R$-module $N$ we define a $G$-graded $S$-module $\hm{G}{R}{M}{N}$ as follows: Its underlying additive group and its $G$-graduation are those of $\hm{G}{R}{h_*(M)}{N}$; its $S$-scalar multiplication is given by $(su)(x)=u(sx)$ for $s\in S^\hme$, $u\in\hm{G}{R}{h_*(M)}{N}^\hme$ and $x\in M^\hme$. If $u\colon M'\rightarrow M$ is a morphism in $\grmod^G(S)$ and $v\colon N\rightarrow N'$ is a morphism in $\grmod^G(R)$, then the map \[\hm{G}{R}{h_*(u)}{v}\colon\hm{G}{R}{h_*(M)}{N}\rightarrow\hm{G}{R}{h_*(M')}{N'}\] is $S$-linear and thus defines a morphism $\hm{G}{R}{M}{N}\rightarrow\hm{G}{R}{M'}{N'}$ in $\grmod^G(S)$, denoted by $\hm{G}{R}{u}{v}$. These definitions give rise to a bifunctor \[\hm{G}{R}{\bullet}{\sq}\colon\grmod^G(S)\times\grmod^G(R)\rightarrow\grmod^G(S)\] with $h_*(\hm{G}{R}{\bullet}{\sq})=\hm{G}{R}{h_*(\bullet)}{\sq}$. By \ref{a50} A) there is a canonical monomorphism of $H$-graded $R\ps$-modules \[\beta^\psi_{h_*(M),N}\colon\hm{G}{R}{h_*(M)}{N}\ps\rightarrowtail\hm{H}{R\ps}{h_*(M)\ps}{N\ps}.\] Its source equals $h_*(\hm{G}{R}{M}{N})\ps$, its target equals $(h\ps)_*(\hm{H}{R\ps}{M\ps}{N\ps})$, and on use of \ref{b10} it is readily checked that its underlying map is $S$-linear. Thus, it defines a monomorphism of $H$-graded $S\ps$-modules \[\beta^{\psi,h}_{M,N}\colon\hm{G}{R}{M}{N}\ps\rightarrowtail\hm{H}{R\ps}{M\ps}{N\ps}.\] As $\beta^\psi_{h_*(M),N}$ is natural in $M$ and $N$, the same holds for $\beta^{\psi,h}_{M,N}$, and so we get a canonical monomorphism of bifunctors \[\beta^{\psi,h}\colon\hm{G}{R}{\bullet}{\sq}\ps\rightarrowtail\hm{H}{R\ps}{\bullet\ps}{\sq\ps}.\] Clearly, $\beta^\psi=\beta^{\psi,\Id_R}$. It follows from \ref{a50} A) that $\beta^{\psi,h}_{M,N}$ is an isomorphism for every $G$-graded $R$-module $N$ if and only if $\ke(\psi)$ is finite or $h_*(M)$ is small.\smallskip

B) Taking $M=S$ in A) we get a functor \[\h(\bullet)\dfgl\hm{G}{R}{S}{\bullet}\colon\grmod^G(R)\rightarrow\grmod^G(S)\] with $h_*(\h(\bullet))=\hm{G}{R}{h_*(S)}{\bullet}$, called \textit{scalar coextension (from $R$ to $S$) by means of $h$.} By A) there is a canonical monomorphism \[\beta^{\psi,h}_{S,\bullet}\colon\h(\bullet)\ps\rightarrowtail\widetilde{h\ps}(\bullet\ps),\] and $\h$ commutes with $\psi$-coarsening, i.e., $\beta^{\psi,h}_{S,\bullet}$ is an isomorphism if and only if $\ke(\psi)$ is finite or $h_*(S)$ is small.
\end{no}

\begin{no}\label{b40}
Each of the functors $h_*$, $h^*$ and $\h$ commutes with shifts, i.e., for $g\in G$ there are isomorphisms of functors $h_*(\bullet(g))\cong h_*(\bullet)(g)$, $h^*(\bullet(g))\cong h^*(\bullet)(g)$, and $\h(\bullet(g))\cong\h(\bullet)(g)$.
\end{no}

\begin{no}\label{b50}
A) Let $M$ be a $G$-graded $S$-module. One can show analogously to the ungraded case (\cite[II.4.1 Proposition 1]{a}) that there are adjunctions \[\bigl(\grmod^G(S)\xrightarrow{h_*(M\otimes_S\bullet)}\grmod^G(R),\grmod^G(R)\xrightarrow{\hm{G}{R}{M}{\bullet}}\grmod^G(S)\bigr)\] and \[\bigl(\grmod^G(R)\xrightarrow{M\otimes_R\bullet}\grmod^G(S),\grmod^G(S)\xrightarrow{h_*(\hm{G}{S}{M}{\bullet})}\grmod^G(R)\bigr).\]

B) Let $L$ and $M$ be $G$-graded $S$-modules, and let $N$ be a $G$-graded $R$-module. By A), \ref{a50} A) and the symmetry of tensor products we have for $g\in G$ an isomorphism \[\hm{}{\grmod^G(R)}{h_*(L\otimes_SM)}{N(g)}\cong\hm{}{\grmod^G(S)}{L}{\hm{G}{R}{M}{N}(g)}\] in $\ab$. Taking the direct sum over $g\in G$ and keeping in mind \ref{b30} A) we get an isomorphism \[\alpha^h_{L,M,N}\colon\hm{G}{R}{L\otimes_SM}{N}\overset{\cong}\longrightarrow\hm{G}{S}{L}{\hm{G}{R}{M}{N}}\] in $\grmod^G(S)$ that is natural in $L$, $M$ and $N$. Moreover, there is a commutative diagram \[\xymatrix@C70pt{\hm{G}{R}{M\otimes_SL}{N}\ps\ar[r]^(.47){(\alpha^h_{M,L,N})\ps}_(.47)\cong\ar@{ >->}[dd]_{\beta^{\psi,h}_{M\otimes_SL,N}}&\hm{G}{S}{L}{\hm{G}{R}{M}{N}}\ps\ar@{ >->}[d]^{\beta^{\psi,h}_{L,\hm{G}{R}{M}{N}}}\\&\hm{H}{S\ps}{L\ps}{\hm{G}{R}{M}{N}\ps}\ar@{ >->}[d]^{\hm{H}{S\ps}{L\ps}{\beta^{\psi,h}_{M,N}}}\\\hm{H}{R\ps}{M\ps\otimes_{S\ps}L\ps}{N\ps}\ar[r]^(.46){\alpha^h_{M\ps,L\ps,N\ps}}_(.46)\cong&\hm{H}{S\ps}{L\ps}{\hm{H}{R\ps}{M\ps}{N\ps}}}\] in $\grmod^H(S\ps)$. If $\ke(\psi)$ is finite or $h_*(L)$ and $h_*(M)$ are small, then all the vertical monomorphisms are isomorphisms (\ref{a60} C), \ref{b30} A)).
\end{no}

\begin{no}\label{b60}
A) Let $L$ and $N$ be $G$-graded $S$-modules, and let $M$ be a $G$-graded $R$-module. There is a morphism\[\pi^h_{L,M,N}\colon\hm{G}{R}{L}{M}\otimes_SN\rightarrow\hm{G}{S}{L}{M\otimes_RN}\] in $\grmod^G(S)$ with $u\otimes x\mapsto(y\mapsto u(y)\otimes x)$ for $u\in\hm{G}{R}{L}{M}^\hme$, $x\in N^\hme$ and $y\in L^\hme$ that is natural in $L$, $M$ and $N$. Moreover, there is a commutative diagram \[\xymatrix@C70pt{(\hm{G}{R}{L}{M}\otimes_SN)\ps\ar[r]^{(\pi^h_{L,M,N})\ps}\ar[d]_{\beta^{\psi,h}_{L,M}\otimes_{S\ps}N\ps}&\hm{G}{S}{L}{M\otimes_RN}\ps\ar@{ >->}[d]^{\beta^{\psi,h}_{L,M\otimes_RN}}\\\hm{H}{R\ps}{L\ps}{M\ps}\otimes_{S\ps}N\ps\ar[r]^(.49){\pi^{h\ps}_{L\ps,M\ps,N\ps}}&\hm{H}{S\ps}{L\ps}{M\ps\otimes_{R\ps}N\ps}}\] in $\grmod^H(S\ps)$. If $\ke(\psi)$ is finite or $h_*(L)$ is small, then both vertical morphisms are isomorphisms (\ref{b30} A)).\smallskip

B) Let $L$ be a $G$-graded $S$-module, and let $M$ and $N$ be $G$-graded $R$-modules. By A) and \ref{b30} A) there is a morphism \[\pi^{\Id_R}_{h_*(L),M,N}\colon h_*(\hm{G}{R}{L}{M})\otimes_RN\rightarrow h_*(\hm{G}{R}{L}{M\otimes_RN})\] in $\grmod^G(R)$ that is $S$-linear. Thus, there is a morphism \[\nu^h_{L,M,N}\colon\hm{G}{R}{L}{M}\otimes_RN\rightarrow\hm{G}{R}{L}{M\otimes_RN}\] in $\grmod^G(S)$ with $h_*(\nu^h_{L,M,N})=\pi^{\Id_R}_{h_*(L),M,N}$ that is natural in $L$, $M$ and $N$. Moreover, there is a commutative diagram \[\xymatrix@C70pt{(\hm{G}{R}{L}{M}\otimes_RN)\ps\ar[r]^{(\nu^h_{L,M,N})\ps}\ar[d]_{\beta^{\psi,h}_{L,M}\otimes_{R\ps}N\ps}&\hm{G}{R}{L}{M\otimes_RN}\ps\ar@{ >->}[d]^{\beta^{\psi,h}_{L,M\otimes_RN}}\\\hm{H}{R\ps}{L\ps}{M\ps}\otimes_{R\ps}N\ps\ar[r]^(.49){\nu^h_{L\ps,M\ps,N\ps}}&\hm{H}{R\ps}{L\ps}{M\ps\otimes_{R\ps}N\ps}}\] in $\grmod^H(S\ps)$. If $\ke(\psi)$ is finite or $h_*(L)$ is small, then both vertical morphisms are isomorphisms (\ref{b30} A)).\smallskip

C) Let $M$ be a $G$-graded $S$-module and let $N$ be a $G$-graded $R$-module. There is a morphism \[\mu^h_{M,N}\colon M\otimes_RN\rightarrow\hm{G}{R}{\hm{G}{R}{M}{R}}{N}\] in $\grmod^G(S)$ with $x\otimes y\mapsto(u\mapsto u(x)y)$ for $x\in M^\hme$, $y\in N^\hme$ and $u\in\hm{G}{R}{M}{R}^\hme$ that is natural in $M$ and $N$. Moreover, there is a commutative diagram \[\xymatrix@C90pt{(M\otimes_RN)\ps\ar[r]^(.42){(\mu^h_{M,N})\ps}\ar[d]_{\mu^{h\ps}_{M\ps,N\ps}}&\hm{G}{R}{\hm{G}{R}{M}{R}}{N}\ps\ar@{ >->}[d]^{\beta^{\psi,h}_{\hm{G}{R}{M}{R},N}}\\\hm{H}{R\ps}{\hm{H}{R\ps}{M\ps}{R\ps}}{N\ps}\ar@{ >->}[r]^(.52){\hm{H}{R\ps}{\beta^{\psi,h}_{M,R}}{N\ps}}&\hm{H}{R\ps}{\hm{G}{R}{M}{R}\ps}{N\ps}}\] in $\grmod^H(S\ps)$. If $\ke(\psi)$ is finite or $h_*(M)$ is projective and of finite type, then the lower horizontal and the right vertical morphism are isomorphisms (\ref{a140} a), \ref{b30} A)).\smallskip

D) For $G$-graded $R$-modules $M$ and $N$ there is a commutative diagram \[\xymatrix{M\otimes_RN\ar[rr]^(.45){\mu^{\Id_R}_{M,N}}\ar[rd]_(.4){\tau_M\otimes_RN\quad}&&\hm{G}{R}{\hm{G}{R}{M}{R}}{N}\\&\hm{G}{R}{\hm{G}{R}{M}{R}}{R}\otimes_RN\ar[ru]_(.6){\qquad\nu^{\Id_R}_{\hm{G}{R}{M}{R},R,N}}}\] in $\grmod^G(R)$.
\end{no}

\begin{prop}\label{b70}
Let $L$ be a $G$-graded $S$-module, and let $M$ and $N$ be $G$-graded $R$-modules. If $N$ is projective then $\nu^h_{L,M,N}$ is a monomorphism. If $h_*(L)$ or $N$ is projective and of finite type, or if $h_*(L)$ is small and $N$ is projective, then $\nu^h_{L,M,N}$ is an isomorphism.\footnote{For $G=0$ the last statement generalises \cite[II.4 Exercice 3]{a}.}
\end{prop}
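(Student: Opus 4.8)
The plan is to reduce to the case $h=\Id_R$ and then to imitate the dévissage carried out in the proof of \ref{a120}. Scalar restriction $h_*$ is faithful and conservative (\ref{b10}), and it is exact since it leaves underlying graded abelian groups unchanged; hence it reflects monomorphisms and isomorphisms. By \ref{b60} B) we have $h_*(\nu^h_{L,M,N})=\pi^{\Id_R}_{h_*(L),M,N}=\nu^{\Id_R}_{h_*(L),M,N}$, and since every hypothesis of the statement is phrased directly in terms of $h_*(L)$ and $N$, it suffices to prove the corresponding statements for the morphism \[\nu^{\Id_R}_{L,M,N}\colon\hm{G}{R}{L}{M}\otimes_RN\rightarrow\hm{G}{R}{L}{M\otimes_RN}\] in $\grmod^G(R)$, where now $L$, $M$ and $N$ denote arbitrary $G$-graded $R$-modules, so that the hypotheses become conditions on $L$ and $N$.

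First I would compute for free $N=\bigoplus_{i\in I}R(g_i)$. Since $\bullet\otimes_R\sq$ commutes with direct sums and $\bullet\otimes_RR(g_i)\cong\bullet(g_i)$ (\ref{a40} A)), and using the shift isomorphisms for Hom (\ref{a50} A)), I would assemble a commutative diagram identifying $\nu^{\Id_R}_{L,M,N}$ with the canonical morphism $\Lambda_{L,(M(g_i))_{i\in I}}\colon\bigoplus_{i\in I}\hm{G}{R}{L}{M(g_i)}\rightarrow\hm{G}{R}{L}{\bigoplus_{i\in I}M(g_i)}$ of \ref{a60} A). Checking that the explicit formula $u\otimes x\mapsto(y\mapsto u(y)\otimes x)$ matches $\Lambda$ under these identifications is the technical heart of the argument and the step I expect to be most tedious, although it is entirely routine. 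Granting it, $\nu^{\Id_R}_{L,M,N}$ is a monomorphism for free $N$ because $\Lambda$ always is, and it is an isomorphism for free $N$ when $I$ is finite (any $L$) or when $L$ is small (any $I$), because $\Lambda_{L,\NN}$ is an isomorphism in those cases (\ref{a60} A)).

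Next I would descend to the stated hypotheses on $N$ by dévissage in $N$, as in the first diagram of \ref{a120}. Both source and target of $\nu^{\Id_R}_{L,M,N}$ are additive functors of $N$ and $\nu^{\Id_R}$ is natural in $N$, so if $N$ is a direct summand of $P$, say $P\cong N\oplus N'$, then $\nu^{\Id_R}_{L,M,P}\cong\nu^{\Id_R}_{L,M,N}\oplus\nu^{\Id_R}_{L,M,N'}$, whence $\nu^{\Id_R}_{L,M,N}$ is a monomorphism or an isomorphism whenever $\nu^{\Id_R}_{L,M,P}$ is. A projective $N$ is a direct summand of a free module, and of a free module of finite rank if it is moreover of finite type (\ref{a50} B)). Together with the previous paragraph this yields that $\nu^{\Id_R}_{L,M,N}$ is a monomorphism when $N$ is projective, and an isomorphism when $N$ is projective and of finite type, or when $L$ is small and $N$ is projective.

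It remains to treat $L$ projective and of finite type with $N$ arbitrary, for which I would instead perform the dévissage in the variable $L$. Again both sides are additive functors of $L$ and $\nu^{\Id_R}$ is natural in $L$, so by the same retract argument I may reduce successively to $L$ a direct summand of a free module of finite rank (\ref{a50} B)), to $L=R(g)$ for some $g\in G$ (by additivity over a finite index set), and finally, by the shift compatibilities (\ref{b40}, \ref{a40} A), \ref{a50} A)), to $L=R$. For $L=R$ the canonical isomorphism $\hm{G}{R}{R}{\bullet}\cong\bullet$ identifies $\nu^{\Id_R}_{R,M,N}$ with the identity of $M\otimes_RN$, so it is an isomorphism for every $N$. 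This completes all cases.
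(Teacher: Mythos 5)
Your proof is correct, and most of it runs parallel to the paper's: the binary direct-sum d\'evissage in each variable, the reduction of the case where $h_*(L)$ is projective and of finite type down to $L=R$ via shifts, the identification of $\nu^{\Id_R}_{L,M,\bigoplus_{i\in I}R(g_i)}$ with $\Lambda_{L,(M(g_i))_{i\in I}}$, and the final descent through the faithful and conservative functor $h_*$ --- your upfront reduction to $h=\Id_R$ is the same device the paper applies at the end of its proof, and it is legitimate because every hypothesis concerns only $h_*(L)$ and $N$. The genuine divergence is in the first claim, that $N$ projective forces $\nu^h_{L,M,N}$ to be a monomorphism. The paper proves this by coarsening to the trivial group: $N_{[0]}$ is projective, hence flat, so tensoring the monomorphism $\beta^{0,h}_{L,M}$ with $N_{[0]}$ stays monic, the ungraded statement \cite[II.4.2 Proposition 2]{a} settles the coarsened morphism, and the commutative square of \ref{b60} B) together with faithfulness of coarsening transports the conclusion back to $\grmod^G(S)$. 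You instead observe that the $\Lambda$-identification --- which the paper draws only under the smallness hypothesis, but which commutes unconditionally --- already yields the monomorphism for free $N$ of arbitrary rank, since $\Lambda_{L,\NN}$ is always a monomorphism (\ref{a60} A)), and then pass to direct summands. What each route buys: yours is uniform (all three claims flow from the same diagram) and self-contained within the graded category, needing neither the ungraded citation nor any flatness argument; the paper's is shorter at that spot precisely because it outsources the content to Bourbaki via its coarsening machinery. Two minor points: faithfulness of $h_*$ alone already reflects monomorphisms, so your appeal to exactness is superfluous; and the fact that $\Lambda_{L,\NN}$ is an isomorphism for finite families and arbitrary $L$, which you also use, is not literally stated in \ref{a60} A), though it is immediate from additivity of Hom.
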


\begin{proof}
Let $L'$ be a further $G$-graded $S$-module and let $N'$ be a further $G$-graded $R$-module. Then, there are commutative diagrams \[\xymatrix{\hm{G}{R}{L\oplus L'}{M}\otimes_RN\ar[d]_{\nu^h_{L\oplus L',M,N}}\ar[r]^(.37){\cong}&(\hm{G}{R}{L}{M}\otimes_RN)\oplus(\hm{G}{R}{L'}{M}\otimes_RN)\ar[d]^{\nu^h_{L,M,N}\oplus\nu^h_{L',M,N}}\\\hm{G}{R}{L\oplus L'}{M\otimes_RN}\ar[r]^(.39){\cong}&\hm{G}{R}{L}{M\otimes_RN}\oplus\hm{G}{R}{L'}{M\otimes_RN}}\] and \[\xymatrix{\hm{G}{R}{L}{M}\otimes_R(N\oplus N')\ar[d]_{\nu^h_{L,M,N\oplus N'}}\ar[r]^(.38){\cong}&(\hm{G}{R}{L}{M}\otimes_RN)\oplus(\hm{G}{R}{L}{M}\otimes_RN')\ar[d]^{\nu^h_{L,M,N}\oplus\nu^h_{L,M,N'}}\\\hm{G}{R}{L}{M\otimes_R(N\oplus N')}\ar[r]^(.4){\cong}&\hm{G}{R}{L}{M\otimes_RN}\oplus\hm{G}{R}{L}{M\otimes_RN'}}\] in $\grmod^G(S)$ where the unmarked morphisms are the canonical ones. It follows that $\nu^h_{L\oplus L',M,N}$ is a mono- or isomorphism if and only if both $\nu^h_{L,M,N}$ and $\nu^h_{L',M,N}$ are so, and that $\nu^h_{L,M,N\oplus N'}$ is a mono- or isomorphism if and only if both $\nu^h_{L,M,N}$ and $\nu^h_{L,M,N'}$ are so.

If $N$ is projective, then $N_{[0]}$ is projective and hence flat (\ref{a40} B), \ref{a50} B)), so that $\beta^0_{L,M}\otimes_{R_{[0]}}N_{[0]}$ is a monomorphism (\ref{b30} A)) and that $\nu^h_{L_{[0]},M_{[0]},N_{[0]}}$ is a monomorphism by the corresponding ungraded statement (\cite[II.4.2 Proposition 2]{a}). Now, $\nu^h_{L,M,N}$ is a monomorphism by \ref{b60} B).

If $h_*(L)$ or $N$ is projective and of finite type, then by the first paragraph we can suppose first that it is free of finite rank, and second that it equals $R$, in which case the claim is clear. So, suppose that $h_*(L)$ is small and $N$ is projective. By the first paragraph we can suppose that $N=\bigoplus_{i\in I}R(g_i)$ for a family $(g_i)_{i\in I}$ in $G$. Then, there is a commutative diagram \[\xymatrix@C40pt@R15pt{\hm{G}{R}{h_*(L)}{M}\otimes_R(\bigoplus_{i\in I}R(g_i))\ar[r]^{\nu^{\Id_R}_{h_*(L),M,N}}\ar[d]_\cong&\hm{G}{R}{h_*(L)}{M\otimes_R(\bigoplus_{i\in I}R(g_i))}\ar[d]^\cong\\\bigoplus_{i\in I}\hm{G}{R}{h_*(L)}{M(g_i)}\ar@{ >->}[r]^{\Lambda_{h_*(L),(M(g_i))_{i\in I}}}_\cong&\hm{G}{R}{h_*(L)}{\bigoplus_{i\in I}M(g_i)}}\] in $\grmod^G(R)$, where the vertical morphisms are the canonical ones (\ref{a40} A), \ref{a50} A), \ref{a60} A)). The claim follows now since $\nu^{\Id_R}_{h_*(L),M,N}=h_*(\nu^h_{L,M,N})$ and $h_*$ is conservative (\ref{b10}).
\end{proof}

\begin{cor}\label{b80}
Let $M$ and $N$ be $G$-graded $R$-modules. If $M$ and $N$ are projective then $\mu^{\Id_R}_{M,N}$ is a monomorphism. If $M$ is projective and of finite type then $\mu^{\Id_R}_{M,N}$ is an isomorphism.
\end{cor}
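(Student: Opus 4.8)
The plan is to exploit the factorisation recorded in \ref{b60} D), which expresses $\mu^{\Id_R}_{M,N}$ as the composite
\[\mu^{\Id_R}_{M,N}=\nu^{\Id_R}_{\hm{G}{R}{M}{R},R,N}\circ(\tau_M\otimes_RN).\]
Thus it suffices to control the two factors separately, reading off properties of $\tau_M$ from \ref{a130} and properties of the $\nu$-morphism from \ref{b70}, specialised to $h=\Id_R$ so that $h_*$ is the identity functor. In the instance $\nu^{\Id_R}_{\hm{G}{R}{M}{R},R,N}$ the first argument is $L=\hm{G}{R}{M}{R}$, the middle argument is $R$, and the last is $N$, so that $h_*(L)=\hm{G}{R}{M}{R}$.

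For the first assertion, suppose $M$ and $N$ are projective. Then $\tau_M$ is a monomorphism by \ref{a130}. Since $N$ is projective it is flat (\ref{a50} B)), hence $\bullet\otimes_RN$ preserves monomorphisms, so that $\tau_M\otimes_RN$ is a monomorphism. On the other hand, as $N$ is projective, \ref{b70} shows that $\nu^{\Id_R}_{\hm{G}{R}{M}{R},R,N}$ is a monomorphism. The composite of two monomorphisms being a monomorphism, we conclude that $\mu^{\Id_R}_{M,N}$ is a monomorphism.

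For the second assertion, suppose $M$ is projective and of finite type. Then $\tau_M$ is an isomorphism by \ref{a130}, hence so is $\tau_M\otimes_RN$ (applying the functor $\bullet\otimes_RN$), with no condition on $N$ required. Moreover, $\hm{G}{R}{M}{R}$ is projective and of finite type by \ref{a140} b); since this module is $h_*(L)$ for $h=\Id_R$, \ref{b70} shows that $\nu^{\Id_R}_{\hm{G}{R}{M}{R},R,N}$ is an isomorphism. The composite of two isomorphisms is an isomorphism, whence $\mu^{\Id_R}_{M,N}$ is an isomorphism.

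There is no genuine obstacle here: once the factorisation of \ref{b60} D) is in hand, the statement reduces to bookkeeping, the only points requiring care being the correct identification of the arguments of $\nu$ (so that the hypotheses of \ref{b70} apply) and the appeal to \ref{a140} b) to see that the relevant dual module inherits projectivity and finite type. The reliance on flatness of $N$ in the monomorphism case, via \ref{a50} B), is likewise routine.
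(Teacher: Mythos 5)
Your proof is correct and takes essentially the same route as the paper: its proof of this corollary consists precisely of citing the factorisation \ref{b60} D) together with \ref{a50} B), \ref{a130} and \ref{b70} for the first claim, and \ref{a130}, \ref{a140} b), \ref{b60} D) and \ref{b70} for the second, which is exactly the bookkeeping you spelled out (including the correct identification of $h_*(L)=\hm{G}{R}{M}{R}$ when specialising \ref{b70} to $h=\Id_R$).
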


\begin{proof}
The first claim follows from \ref{a50} B), \ref{a130}, \ref{b60} D) and \ref{b70}. The second claim follows from \ref{a130}, \ref{a140} b), \ref{b60} D) and \ref{b70}.
\end{proof}


\section{Adjunctions}

\begin{no}\label{c10}
A) For a $G$-graded $R$-module $M$ there are morphisms \[\rho^h_M\colon M\rightarrow h_*(h^*(M)),\;x\mapsto 1_S\otimes x\] and \[\widetilde{\sigma}^h_M\colon h_*(\h(M))\rightarrow M,\;u\mapsto u(1_S)\] in $\grmod^G(R)$ that are natural in $M$. For a $G$-graded $S$-module $N$ there are morphisms \[\sigma^h_N\colon h^*(h_*(N))\rightarrow N,\;\sum_{i\in I}s_i\otimes x_i\mapsto\sum_{i\in I}s_ix_i\] and \[\widetilde{\rho}^h_N\colon N\rightarrow\h(h_*(N)),\;x\mapsto h_*(s\mapsto sx)\] in $\grmod^G(S)$ that are natural in $N$. Altogether we have morphisms of functors \[\rho^h\colon\Id_{\grmod^G(R)}\rightarrow h_*\circ h^*,\quad\sigma^h\colon h^*\circ h_*\rightarrow\Id_{\grmod^G(S)},\]\[\widetilde\rho^h\colon\Id_{\grmod^G(S)}\rightarrow\h\circ h_*,\quad\widetilde\sigma^h\colon h_*\circ\h\rightarrow\Id_{\grmod^G(R).}\] Moreover, if $g\in G$ then $\rho^h_M(g)=\rho^h_{M(g)}$, $\sigma^h_N(g)=\sigma^h_{N(g)}$, $\widetilde{\rho}^h_N(g)=\widetilde{\rho}^h_{N(g)}$, and $\widetilde{\sigma}^h_M(g)=\widetilde{\sigma}^h_{M(g)}$.\smallskip

B) By \ref{b10} and \ref{b20} B) there are isomorphisms of functors \[h_*(h^*(\bullet))\ps\cong(h\ps)_*((h\ps)^*(\bullet\ps))\quad\text{and}\quad h^*(h_*(\bullet))\ps\cong(h\ps)^*((h\ps)_*(\bullet\ps))\] such that the diagrams of functors \[\xymatrix{h^*(h_*(\bullet))\ps\ar[rr]^(.55){\bullet\ps\circ\sigma^h}\ar[d]_\cong&&\bullet\ps&\bullet\ps\ar[rr]^(.43){\bullet\ps\circ\rho^h}\ar@/_1pc/[rrd]_(.45){\rho^{h\ps}\circ\bullet\ps\quad}&&h_*(h^*(\bullet))\ps\ar[d]^{\cong}\\(h\ps)^*((h\ps)_*(\bullet\ps))\ar@/_1pc/[rru]_(.55){\quad\sigma^{h\ps}\circ\bullet\ps}&&&&&(h\ps)_*((h\ps)^*(\bullet\ps))}\] commute.\smallskip

C) By \ref{b10} and \ref{b30} B) there are monomorphisms of functors \[h_*(\h(\bullet))\ps\rightarrowtail(h\ps)_*(\widetilde{h\ps}(\bullet\ps))\quad\text{and}\quad\h(h_*(\bullet))\ps\rightarrowtail\widetilde{h\ps}((h\ps)_*(\bullet\ps))\] such that the diagrams of functors \[\xymatrix{
h_*(\h(\bullet))\ps\ar@{ >->}[d]\ar[rr]^(.55){\bullet\ps\circ\widetilde{\sigma}^h}&&
\bullet\ps&\bullet\ps\ar[rr]^(.43){\bullet\ps\circ\widetilde{\rho}^h}\ar@/_1pc/[rrd]_(.45){\widetilde{\rho}^{h\ps}\circ\bullet\ps}&&\h(h_*(\bullet))\ps\ar@{ >->}[d]\\(h\ps)_*(\widetilde{h\ps}(\bullet\ps))\ar@/_1pc/[rru]_(.55){\quad\widetilde{\sigma}^{h\ps}\circ\bullet\ps}&&&&&\widetilde{h\ps}((h\ps)_*(\bullet\ps))}\] commute. Each of these monomorphisms is an isomorphism if and only if $\ke(\psi)$ is finite or $h_*(S)$ is small.
\end{no}

\begin{thm}\label{c20}
There are an adjunction \[\bigl(\grmod^G(R)\overset{h^*}\longrightarrow\grmod^G(S),\grmod^G(S)\overset{h_*}\longrightarrow\grmod^G(R)\bigr)\] with unit $\rho^h$ and counit $\sigma^h$, and an adjunction \[\bigl(\grmod^G(S)\overset{h_*}\longrightarrow\grmod^G(R),\grmod^G(R)\overset{\h}\longrightarrow\grmod^G(S)\bigr)\] with unit $\widetilde{\rho}^h$ and counit $\widetilde{\sigma}^h$.
\end{thm}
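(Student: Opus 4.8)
The plan is to invoke the criterion for adjunctions recalled in \ref{a05} A). Since the candidate units and counits $\rho^h$, $\sigma^h$, $\widetilde{\rho}^h$ and $\widetilde{\sigma}^h$ have already been constructed as morphisms of functors in \ref{c10} A), it will remain only to verify the two triangle identities for each of the two pairs. For $(h^*,h_*)$ this means checking $(\sigma^h\circ h^*)\circ(h^*\circ\rho^h)=\Id_{h^*}$ and $(h_*\circ\sigma^h)\circ(\rho^h\circ h_*)=\Id_{h_*}$, and for $(h_*,\h)$ the identities $(\widetilde{\sigma}^h\circ h_*)\circ(h_*\circ\widetilde{\rho}^h)=\Id_{h_*}$ and $(\h\circ\widetilde{\sigma}^h)\circ(\widetilde{\rho}^h\circ\h)=\Id_{\h}$. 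Each of these is an equality of morphisms in $\grmod^G(R)$ or $\grmod^G(S)$, so I would evaluate both sides on homogeneous elements using the explicit formulas from \ref{c10} A).

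For $(h^*,h_*)$ the verifications should be immediate. Evaluating the first composite on a homogeneous generator $s\otimes x$ of $h^*(M)=S\otimes_RM$, the unit sends it to $s\otimes(1_S\otimes x)$ and the counit $\sigma^h_{h^*(M)}$ then to $s\cdot(1_S\otimes x)=s\otimes x$, using the $S$-module structure of $h^*(M)$ from \ref{b20} B). For the second composite a homogeneous $x\in h_*(N)$ is sent by the unit to $1_S\otimes x$ and by $h_*(\sigma^h_N)$ to $1_S\cdot x=x$. Thus both triangle identities hold, and \ref{a05} A) yields the first adjunction.

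For $(h_*,\h)$ the only point requiring attention is the twisted $S$-module structure on $\h(M)=\hm{G}{R}{S}{M}$ recorded in \ref{b30} A), namely $(su)(t)=u(st)$. The first triangle identity is again direct: a homogeneous $x\in h_*(N)$ is sent by $h_*(\widetilde{\rho}^h_N)$ to the map $(s\mapsto sx)$ and then by $\widetilde{\sigma}^h_{h_*(N)}$ to its value $(s\mapsto sx)(1_S)=x$. For the second, a homogeneous $u\in\h(M)$ is sent by $\widetilde{\rho}^h_{\h(M)}$ to the map $(s\mapsto su)$ in $h_*(\h(\h(M)))$, and $\h(\widetilde{\sigma}^h_M)$ postcomposes with $\widetilde{\sigma}^h_M$, producing the map $s\mapsto(su)(1_S)$; invoking the twisted action this equals $s\mapsto u(s\cdot 1_S)=u(s)$, so the result is $u$ itself. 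This closes the two remaining triangle identities and, again via \ref{a05} A), establishes the second adjunction.

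The computations are all routine once the module structures are pinned down, so I do not expect a genuine obstacle; the single place where care is needed is the last triangle identity, where one must use precisely the twisted action $(su)(t)=u(st)$ on $\hm{G}{R}{S}{M}$ to see that evaluation at $1_S$ recovers $u$. I note in passing that both adjunctions could alternatively be read off from the bifunctorial adjunctions of \ref{b50} A) by specialising to $M=S$ and using $S\otimes_S\bullet\cong\Id\cong\hm{G}{S}{S}{\bullet}$; however, that route would still require matching the resulting units and counits with the explicit ones of \ref{c10} A), so the direct verification above is the more economical approach and the one I would carry out.
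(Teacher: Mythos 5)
Your proof is correct and takes essentially the same approach as the paper: the paper's entire proof of \ref{c20} reads ``This is readily checked on use of \ref{a05} A)'', and your explicit verification of the four triangle identities is precisely that check, carried out in detail. The one point needing care---using the twisted $S$-action $(su)(t)=u(st)$ on $\h(M)$ in the last identity---is handled correctly.
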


\begin{proof}
This is readily checked on use of \ref{a05} A).
\end{proof}

\begin{cor}\label{c25}
There are isomorphisms \[h_*(\hm{G}{S}{h^*(\bullet)}{\sq})\cong\hm{G}{R}{\bullet}{h_*(\sq)}\quad\text{and}\quad\hm{G}{R}{h_*(\bullet)}{\sq}\cong h_*(\hm{G}{S}{\bullet}{\h(\sq)}).\]
\end{cor}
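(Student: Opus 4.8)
The plan is to read off both isomorphisms from the two adjunctions of \ref{c20}, passing from the underlying bijections of Hom-sets to isomorphisms of graded modules by means of the decomposition $\hm{G}{R}{\bullet}{\sq}=\bigoplus_{g\in G}\hm{}{\grmod^G(R)}{\bullet}{\sq(g)}$ (\ref{a50} A)) together with the fact that $h_*$ and $\h$ commute with shifts (\ref{b40}).

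First I would treat the left-hand isomorphism. Let $M$ be a $G$-graded $R$-module and $N$ a $G$-graded $S$-module. For each $g\in G$ the adjunction $(h^*,h_*)$ supplies a bijection $\hm{}{\grmod^G(S)}{h^*(M)}{N(g)}\cong\hm{}{\grmod^G(R)}{M}{h_*(N(g))}$, natural in $M$ and $N$, given by $u\mapsto h_*(u)\circ\rho^h_M$, that is, $u\mapsto(x\mapsto u(1_S\otimes x))$. Using $h_*(N(g))=h_*(N)(g)$ (\ref{b40}) and forming the direct sum over $g\in G$, the definition of the graded Hom yields a degree-preserving isomorphism of graded abelian groups between the groups underlying $\hm{G}{S}{h^*(M)}{N}$ and $\hm{G}{R}{M}{h_*(N)}$.

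The point that needs checking is that this is $R$-linear once the source is equipped with the $R$-module structure of $h_*(\hm{G}{S}{h^*(M)}{N})$; this is where the main (entirely routine) bookkeeping lies. Concretely, on the source $R$ acts through $h$, so $r\cdot u=h(r)u$ with $(h(r)u)(y)=h(r)\,u(y)$ by the definition of the $S$-action on a graded Hom, whereas on the target $(r\cdot v)(x)=h(r)\,v(x)$ since the $R$-action on $h_*(N)$ is that of $h(r)$ on $N$. Evaluating the bijection then gives $(h(r)u)(1_S\otimes x)=h(r)\,u(1_S\otimes x)$, so the two structures agree. Degree-preservation and naturality are inherited from the shift-compatibility of $\rho^h$ recorded in \ref{c10} A) and from naturality of the adjunction.

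Finally the right-hand isomorphism can be obtained in the same way from the adjunction $(h_*,\h)$, now using $\h(K(g))=\h(K)(g)$ (\ref{b40}); alternatively, and more cleanly, it is a specialisation of the isomorphism $\alpha^h$ of \ref{b50} B). Indeed, specialising $\alpha^h$ at the middle argument $S$ and using $\bullet\otimes_SS\cong\bullet$ together with $\hm{G}{R}{S}{\sq}=\h(\sq)$ (\ref{b30} B)), one obtains an isomorphism $\hm{G}{R}{\bullet}{\sq}\cong\hm{G}{S}{\bullet}{\h(\sq)}$ of $G$-graded $S$-modules, where the source is formed as in \ref{b30} A); applying $h_*$ and invoking $h_*(\hm{G}{R}{\bullet}{\sq})=\hm{G}{R}{h_*(\bullet)}{\sq}$ (\ref{b30} A)) yields the claim with the $R$-linearity already built in. This second route sidesteps the linearity verification, so the only genuine obstacle is the degree-and-linearity bookkeeping for the first isomorphism.
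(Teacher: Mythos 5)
Your proposal is correct and its main line --- assembling the degreewise bijections coming from the adjunctions of \ref{c20} by means of the shift-compatibility \ref{b40}, and then checking that the resulting degree-preserving bijection is linear for the $R$-structures transported through $h_*$ --- is precisely the paper's own proof, which reads in full: ``This is readily checked on use of \ref{b40} and \ref{c20}.'' Your alternative route to the second isomorphism, specialising $\alpha^h$ of \ref{b50} B) at middle argument $S$ and applying $h_*$ together with $h_*(\hm{G}{R}{\bullet}{\sq})=\hm{G}{R}{h_*(\bullet)}{\sq}$ from \ref{b30} A), is also valid (those results precede the corollary) and does indeed deliver the linearity for free, but it is an embellishment rather than a different proof of the statement as the paper conceives it.
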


\begin{proof}
This is readily checked on use of \ref{b40} and \ref{c20}.
\end{proof}

\begin{cor}
If $l\colon S\rightarrow T$ is a further morphism of $G$-graded rings, then $(l\circ h)_*=h_*\circ l_*$, and there are isomorphisms of functors $(l\circ h)^*\cong l^*\circ h^*$ and $\widetilde{l\circ h}\cong\widetilde{l}\circ\h$.
\end{cor}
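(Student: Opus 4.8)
The plan is to handle the three assertions in turn, treating the first as a direct computation and deriving the other two from the uniqueness of adjoint functors, so that no further verification of explicit morphisms is needed.

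First I would establish the equality $(l\circ h)_*=h_*\circ l_*$ straight from the definition of scalar restriction in \ref{b10}. For a $G$-graded $T$-module $M$, both $(l\circ h)_*(M)$ and $h_*(l_*(M))$ have the same underlying additive group and the same $G$-graduation as $M$, and in each case the $R$-scalar multiplication is given by $r\cdot x=l(h(r))\,x$ for $r\in R^\hme$ and $x\in M^\hme$; the analogous comparison on underlying maps shows that the two functors also agree on morphisms. Hence they coincide as functors, not merely up to isomorphism---a point I shall want in the next two steps.

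Next, for scalar extension, I would invoke the composability of adjunctions (\ref{a05} C)). The adjunctions $(h^*,h_*)$ and $(l^*,l_*)$ furnished by \ref{c20} compose to an adjunction $(l^*\circ h^*,\,h_*\circ l_*)$. On the other hand, \ref{c20} applied to $l\circ h$ gives the adjunction $\bigl((l\circ h)^*,(l\circ h)_*\bigr)$, whose right member equals $h_*\circ l_*$ by the first step. Since left adjoints of a fixed functor are unique up to (unique) isomorphism (\ref{a05} B)), the two left adjoints must agree, giving $(l\circ h)^*\cong l^*\circ h^*$.

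Finally, scalar coextension is entirely dual. The adjunctions $(l_*,\widetilde{l})$ and $(h_*,\h)$ from \ref{c20} compose, via \ref{a05} C), to an adjunction $(h_*\circ l_*,\,\widetilde{l}\circ\h)$, whose left member is $(l\circ h)_*$ by the first step. Comparing with the adjunction $\bigl((l\circ h)_*,\widetilde{l\circ h}\bigr)$ supplied by \ref{c20} and using the uniqueness of right adjoints (\ref{a05} B)), I conclude $\widetilde{l\circ h}\cong\widetilde{l}\circ\h$. I do not expect a genuine obstacle here: the only thing to be careful about is that the first step yields an honest equality of the middle functors, so that the uniqueness statements in \ref{a05} B) apply verbatim rather than only up to a further comparison isomorphism; everything else is formal nonsense.
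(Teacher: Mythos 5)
Your proposal is correct and follows essentially the same route as the paper: the paper likewise obtains $(l\circ h)_*=h_*\circ l_*$ directly from the definition in \ref{b10}, and then deduces the isomorphisms for $(l\circ h)^*$ and $\widetilde{l\circ h}$ by composing the adjunctions of \ref{c20} via \ref{a05} C) and invoking uniqueness of adjoints, \ref{a05} B). Your explicit attention to the fact that the middle functors agree on the nose (so uniqueness of adjoints applies verbatim) is a point the paper leaves implicit, but it is the same argument.
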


\begin{proof}
The first claim follows immediately from the definition of $h_*$ (\ref{b10}). Together with \ref{c20} and \ref{a05} B), C) it implies the other claims.
\end{proof}

\begin{cor}\label{c40}
$\sigma^h$ is an epimorphism and $\widetilde\rho^h$ is a monomorphism.
\end{cor}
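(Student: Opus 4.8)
The plan is to deduce both assertions from the general categorical principle recorded in \ref{a05} E), which links the faithfulness of the functors occurring in an adjunction to whether its unit is a monomorphism and its counit is an epimorphism. The only module-theoretic input required is the faithfulness of scalar restriction $h_*$, already established in \ref{b10}; everything else is purely formal, so I expect no serious obstacle.

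First I would observe that $h_*$ plays two distinct roles in the adjunctions furnished by Theorem \ref{c20}: it is the right adjoint in the adjunction $(h^*,h_*)$, whose counit is $\sigma^h$, and it is the left adjoint in the adjunction $(h_*,\h)$, whose unit is $\widetilde\rho^h$. This double appearance of the same faithful functor is precisely what lets a single lemma handle both claims.

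For the first assertion I would apply \ref{a05} E) to the adjunction $(h^*,h_*)$: since its right adjoint $h_*$ is faithful (\ref{b10}), the counit $\sigma^h$ is an epimorphism. For the second assertion I would apply \ref{a05} E) to the adjunction $(h_*,\h)$: since its left adjoint $h_*$ is faithful (\ref{b10}), the unit $\widetilde\rho^h$ is a monomorphism.

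The argument is thus a short bookkeeping exercise once \ref{a05} E) and the faithfulness of $h_*$ are in place, and there is no genuine difficulty. The only point demanding minimal care is matching the orientation of each adjunction so that $h_*$ occupies the correct slot, namely the right adjoint for the counit statement and the left adjoint for the unit statement; getting this pairing right is exactly what makes both halves of the corollary fall out of the same principle.
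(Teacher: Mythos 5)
Your proposal is correct and is essentially identical to the paper's own proof, which likewise combines the faithfulness of $h_*$ (\ref{b10}) with the adjunctions of \ref{c20} and the principle \ref{a05}~E). You have simply spelled out the bookkeeping (which adjunction slot $h_*$ occupies in each case) that the paper leaves implicit.
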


\begin{proof}
Since $h_*$ is faithful (\ref{b10}), this follows immediately from \ref{a05} E) and \ref{c20}.
\end{proof}

\begin{prop}\label{c50}
a) $\rho^h$ is a mono-, epi- or isomorphism if and only if $\hsub$ is pure, an epimorphism, or an isomorphism.

b) $\widetilde\sigma^h$ is a mono-, epi- or isomorphism, resp., if and only if $\hsub$ is an epimorphism, a section, or an isomorphism, resp.
\end{prop}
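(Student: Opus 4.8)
The plan is to reduce both parts to two elementary identifications of natural transformations, after which each of the six equivalences becomes a one-line categorical translation. Specifically, I claim that under the canonical isomorphisms $R\otimes_R\bullet\cong\Id_{\grmod^G(R)}$ and $\hm{G}{R}{R}{\bullet}\cong\Id_{\grmod^G(R)}$ the unit $\rho^h$ is identified with $\hsub\otimes_R\bullet$ and the counit $\widetilde\sigma^h$ with $\hm{G}{R}{\hsub}{\bullet}$. Both facts are verified by unwinding the definitions in \ref{b10}, \ref{b20} B), \ref{b30} B) and \ref{c10} A).

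First I would establish these identifications. Since $h_*(h^*(\bullet))=h_*(S)\otimes_R\bullet$ (\ref{b20} B)), the component $\rho^h_M\colon M\to h_*(S)\otimes_RM$ is $x\mapsto 1_S\otimes x$; composing the canonical isomorphism $M\cong R\otimes_RM$ with $\hsub\otimes_R\Id_M$ sends $x\mapsto 1_R\otimes x\mapsto h(1_R)\otimes x=1_S\otimes x$, so $\rho^h_M=\hsub\otimes_R\Id_M$ up to that isomorphism, and taking $M=R$ together with $h^*(R)\cong S$ gives $\rho^h_R=\hsub$. Dually, since $h_*(\h(\bullet))=\hm{G}{R}{h_*(S)}{\bullet}$ (\ref{b30} B)), the component $\widetilde\sigma^h_M$ is $u\mapsto u(1_S)$; composing $\hm{G}{R}{\hsub}{M}$ with the canonical isomorphism $\hm{G}{R}{R}{M}\cong M$ sends $u\mapsto u\circ\hsub\mapsto(u\circ\hsub)(1_R)=u(1_S)$, so $\widetilde\sigma^h_M=\hm{G}{R}{\hsub}{M}$ up to that isomorphism, and for $M=R$ we get $\widetilde\sigma^h_R\cong\hm{G}{R}{\hsub}{R}$.

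For part a) I would then argue componentwise, using that a natural transformation between additive functors into $\grmod^G(R)$ is a mono-, epi- or isomorphism exactly when all its components are. The monomorphism case is immediate: $\rho^h_M=\hsub\otimes_R\Id_M$ is a monomorphism for every $M$ precisely when $\hsub$ is pure, by the definition in \ref{a40} C). For the epimorphism case, if $\hsub$ is an epimorphism then right exactness of $\bullet\otimes_RM$ makes each $\rho^h_M$ an epimorphism, while conversely $\rho^h_R=\hsub$ forces $\hsub$ to be one. For the isomorphism case, if $\hsub$ is an isomorphism then so is each $\hsub\otimes_R\Id_M$, and conversely $\rho^h_R=\hsub$ again.

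For part b) the translation is analogous but uses the contravariance of $\hm{G}{R}{\bullet}{M}$. Here $\widetilde\sigma^h$ is a monomorphism iff each $\hm{G}{R}{\hsub}{M}$ is injective, i.e.\ $v\circ\hsub=0$ implies $v=0$ for all $v$, which is exactly the right-cancellation property defining $\hsub$ as an epimorphism of $\grmod^G(R)$. It is an epimorphism iff $\hsub$ is a section: a retraction $p$ of $\hsub$ induces, via $u\mapsto u\circ p$, a section of each $\hm{G}{R}{\hsub}{M}$, whereas conversely surjectivity of $\widetilde\sigma^h_R$ yields a degree-$0$ preimage $p\colon h_*(S)\to R$ of $1_R\in\hm{G}{R}{R}{R}\cong R$, and $R$-linearity of $p$ gives $p\circ\hsub=\Id_R$. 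The isomorphism case follows by combining the two, since a section that is also an epimorphism is an isomorphism. The only genuine care needed throughout is bookkeeping of the $R$- versus $S$-structures on $h_*(S)$, $h^*$ and $\h$ and of the graduation—in particular choosing the retraction $p$ in degree $0$ in the section case—after which there is no real obstacle.
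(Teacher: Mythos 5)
Your proposal is correct and follows essentially the same route as the paper: both identify $\rho^h$ with $\hsub\otimes_R\bullet$ and $\widetilde\sigma^h$ with $\hm{G}{R}{\hsub}{\bullet}$ via the canonical isomorphisms $R\otimes_R\bullet\cong\Id$ and $\hm{G}{R}{R}{\bullet}\cong\Id$, and then translate mono-/epi-/isomorphism of these transformations into purity, epimorphism, section, or isomorphism of $\hsub$. The only difference is that you spell out explicitly (e.g.\ the degree-$0$ retraction argument) the categorical translations that the paper handles by terse citation, which is a matter of detail rather than of method.
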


\begin{proof}
a) We have a commutative diagram \[\xymatrix@C40pt@R15pt{\Id_{\grmod^G(R)}(\bullet)\ar[r]^(.43){\rho^h}\ar[d]_{\cong}&h_*(h^*(\bullet))\ar@{=}[d]\\R\otimes_R\bullet\ar[r]^(.45){\hsub\otimes_R\bullet}&h_*(S)\otimes_R\bullet}\] of functors, where the left vertical morphism is the canonical one (\ref{b20} A)). Therefore, $\rho^h$ is a mono-, epi- or isomorphism if and only if $\hsub\otimes_R\bullet$ is so, thus if and only if $\hsub$ is pure, an epi-, or an isomorphism.

b) We have a commutative diagram \[\xymatrix@C60pt@R15pt{h_*(\h(\bullet))\ar[r]^{\widetilde{\sigma}^h}\ar@{=}[d]&\Id_{\grmod^G(R)}(\bullet)\\\hm{G}{R}{h_*(S)}{\bullet}\ar[r]^(.53){\hm{G}{R}{\hsub}{\bullet}}&\hm{G}{R}{R}{\bullet}\ar[u]_\cong}\] of functors, where the right vertical morphism is the canonical one. Therefore, $\widetilde{\sigma}^h$ is a mono-, epi- or isomorphism if and only if $\hm{G}{R}{\hsub}{\bullet}$ is so, hence if and only if $\hm{}{\grmod^G(R)}{\hsub}{\bullet}$ is a mono-, epi- or isomorphism, and thus if and only if $\hsub$ is an epimorphism, a section, or an isomorphism.
\end{proof}

\begin{cor}
a) $\rho^h$ is a mono-, epi- or isomorphism if and only if $\rho^{h\ps}$ is so.

b) $\widetilde{\sigma}^h$ is a mono-, epi- or isomorphism if and only if $\widetilde{\sigma}^{h\ps}$ is so.
\end{cor}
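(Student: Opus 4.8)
The plan is to reduce both statements, via the characterisations already established in \ref{c50}, to assertions about the single morphism $\hsub$ and its coarsening, and then to invoke the standard exactness and conservativeness properties of the coarsening functor recorded in \ref{a10}, together with the compatibility results \ref{a20} and \ref{a40} C).

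The first step is to record the identification $\hsub\ps=\underline{h\ps}$. Indeed, $\hsub$ is by definition the morphism $h\colon R\rightarrow h_*(S)$ regarded in $\grmod^G(R)$ (\ref{b10}), and since scalar restriction commutes with $\psi$-coarsening (\ref{b10}) its coarsening $\hsub\ps$ is precisely $h\ps\colon R\ps\rightarrow(h\ps)_*(S\ps)$ regarded in $\grmod^H(R\ps)$, which is what we denote $\underline{h\ps}$. After this observation, both parts become purely a matter of checking that the relevant module-theoretic properties pass back and forth along $\bullet\ps$.

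For a), \ref{c50} a) says that $\rho^h$ is a mono-, epi- or isomorphism if and only if $\hsub$ is pure, an epimorphism, or an isomorphism, and likewise $\rho^{h\ps}$ is a mono-, epi- or isomorphism if and only if $\underline{h\ps}=\hsub\ps$ is pure, an epimorphism, or an isomorphism. Hence it suffices to note that each of these three properties is both preserved and reflected by $\bullet\ps$: purity by \ref{a40} C); epimorphisms because $\bullet\ps$ is exact (so it preserves them) and conservative (so, via the vanishing of the coarsened cokernel, it reflects them), cf.\ \ref{a10}; and isomorphisms because $\bullet\ps$ is conservative, reflecting them, preservation being automatic.

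Part b) is entirely analogous, now using \ref{c50} b): $\widetilde{\sigma}^h$ (resp.\ $\widetilde{\sigma}^{h\ps}$) is a mono-, epi- or isomorphism if and only if $\hsub$ (resp.\ $\hsub\ps$) is an epimorphism, a section, or an isomorphism. Epimorphisms and isomorphisms are handled exactly as in a), while sections are preserved and reflected by $\bullet\ps$ according to \ref{a20}. The only point requiring a moment's care — the closest thing here to an obstacle — is the identification $\hsub\ps=\underline{h\ps}$ made at the outset; once this is in place, everything follows formally from \ref{c50}, \ref{a10}, \ref{a20} and \ref{a40} C).
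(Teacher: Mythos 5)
Your proposal is correct and takes essentially the same route as the paper, whose proof is precisely the one-line citation of \ref{a10}, \ref{a20}, \ref{a40} C) and \ref{c50} that you have spelled out in detail. The identification $\hsub\ps=\underline{h\ps}$ via \ref{b10} and the preservation/reflection arguments you give are exactly the details the paper leaves implicit.
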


\begin{proof}
This follows from \ref{a10}, \ref{a20}, \ref{a40} C), and \ref{c50}.
\end{proof}

\begin{no}\label{c70}
A) If $M$ is a small $G$-graded $R$-module then $h^*(M)$ is small by \ref{a60} A) and the corresponding ungraded result (\cite[3$^\circ$]{rentschler}). The converse need not hold; an (ungraded) counterexample is given by the canonical bimorphism $h\colon\Z\hookrightarrow\Q$ and the non-small $\Z$-module $M=(\Z/2\Z)^{\oplus\N}$, for $h^*(M)=0$ is small.\smallskip

B) If $N$ is a $G$-graded $S$-module such that $h_*(N)$ is small, then $N$ is small. Indeed, by \ref{a60} A) we can suppose that $G=0$. If $h_*(N)$ is small, then so is $h^*(h_*(N))$ by A), and since $\sigma^h_N\colon h^*(h_*(N))\rightarrow N$ is an epimorphism (\ref{c40}) it follows from \cite[2$^\circ$]{rentschler} that $N$ is small. The converse need not hold; an (ungraded) counterexample is given by the canonical bimorphism $h\colon\Z\hookrightarrow\Q$ and the small $\Q$-module $\Q$, for $h_*(\Q)$ is not small.\smallskip

C) If $h_*(S)$ is projective and of finite type and $M$ is a small $G$-graded $R$-module, then $\h(M)$ is small by \ref{a140} a) and B).
\end{no}

\begin{prop}\label{c79}
a) $h_*$ commutes with inductive and projective limits, $h^*$ commutes with inductive limits, and $\h$ commutes with projective limits.

b) $h^*$ is exact if and only if $h_*(S)$ is flat, commutes with products if and only if $h_*(S)$ is of finite presentation, and commutes with projective limits if and only if $h_*(S)$ is projective and of finite type.

c) $\h$ is exact if and only if $h_*(S)$ is projective, commutes with direct sums if and only if $h_*(S)$ is small, and commutes with inductive limits if and only if $h_*(S)$ is projective and of finite type.
\end{prop}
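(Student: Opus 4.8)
The plan is to read part~(a) straight off the two adjunctions of \ref{c20}, and then to reduce every assertion in (b) and (c) to a property of the single functor $h_*(S)\otimes_R\bullet$ or $\hm{G}{R}{h_*(S)}{\bullet}$, exploiting that $h_*$ is conservative and very well behaved.

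For (a): by \ref{a05}~D) a functor commutes with projective (resp.\ inductive) limits as soon as it has a left (resp.\ right) adjoint. The adjunction $(h^*,h_*)$ of \ref{c20} then shows that $h^*$ commutes with inductive limits and $h_*$ with projective limits, and the adjunction $(h_*,\h)$ shows that $h_*$ also commutes with inductive limits and $\h$ with projective limits; this is all of (a). As a bonus, $h_*$ now commutes with both kinds of limits, so by \ref{a05}~F) it is both left and right exact, hence exact; being moreover faithful (\ref{b10}) it reflects exactness, and being conservative it reflects isomorphisms. This reflection property is the engine for (b) and (c).

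For (b), I would test each property of $h^*$ after composing with $h_*$, which is legitimate because $h_*$ reflects the relevant properties and, by (a), commutes with all limits; recall $h_*(h^*(\bullet))=h_*(S)\otimes_R\bullet$ (\ref{b20}~B)). Thus $h^*$ is exact iff $h_*(S)\otimes_R\bullet$ is exact, i.e.\ iff $h_*(S)$ is flat (\ref{a40}~B)). For a family $\NN=(N_j)_{j\in J}$ of $G$-graded $R$-modules, applying the product-preserving conservative functor $h_*$ to the canonical comparison $h^*(\prod_{j}N_j)\to\prod_{j}h^*(N_j)$ identifies it with $\kappa_{h_*(S),\NN}$, so by \ref{a90}~b) $h^*$ commutes with products iff $h_*(S)$ is of finite presentation. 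Finally $h^*=S\otimes_R\bullet$ is always right exact, so by \ref{a05}~F) it commutes with projective limits iff it is exact and commutes with products, i.e.\ iff $h_*(S)$ is flat and of finite presentation; by the last sentence of \ref{a50}~B) this is equivalent to $h_*(S)$ being projective and of finite type.

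Part (c) runs in perfect parallel with $h_*(\h(\bullet))=\hm{G}{R}{h_*(S)}{\bullet}$ (\ref{b30}~B)): $\h$ is exact iff $\hm{G}{R}{h_*(S)}{\bullet}$ is exact, i.e.\ iff $h_*(S)$ is projective (\ref{a50}~B)); applying the coproduct-preserving conservative functor $h_*$ to the canonical comparison $\bigoplus_{j}\h(N_j)\to\h(\bigoplus_{j}N_j)$ identifies it with $\Lambda_{h_*(S),\NN}$, so by \ref{a60}~A) $\h$ commutes with direct sums iff $h_*(S)$ is small; and since $\h=\hm{G}{R}{S}{\bullet}$ is always left exact, \ref{a05}~F) gives that $\h$ commutes with inductive limits iff it is exact and commutes with direct sums, i.e.\ iff $h_*(S)$ is projective and small, which by \ref{a60}~A) (a projective small module is of finite type) is the same as projective and of finite type. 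The main obstacle I anticipate is not any single hard step but the careful \textit{transport across $h_*$}: one must verify that applying $h_*$ to the canonical comparison morphisms for products and for coproducts reproduces exactly $\kappa_{h_*(S),\NN}$ and $\Lambda_{h_*(S),\NN}$, and that conservativity of $h_*$ together with its commuting with these limits genuinely allows one to deduce the property of $h^*$ (resp.\ $\h$) from that of its $h_*$-composite. Everything else is a direct assembly of the cited results.
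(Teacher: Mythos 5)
Your proposal is correct and follows essentially the same route as the paper: part a) from \ref{a05} D) and the adjunctions of \ref{c20}; parts b) and c) by transporting each property along the conservative, exact, limit-preserving functor $h_*$ to the functors $h_*(S)\otimes_R\bullet$ and $\hm{G}{R}{h_*(S)}{\bullet}$, then invoking \ref{a40} B), \ref{a90} b), \ref{a50} B), \ref{a60} A), and finally \ref{a05} F) together with the automatic right (resp.\ left) exactness of $h^*$ (resp.\ $\h$) for the limit statements. The extra care you take in identifying the comparison morphisms with $\kappa_{h_*(S),\NN}$ and $\Lambda_{h_*(S),\NN}$ is exactly what the paper's citations of part a) alongside \ref{b20} A) and \ref{b30} A) compress.
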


\begin{proof}
a) follows from \ref{a05} D) and \ref{c20}.

b) $h^*$ is exact if and only if $h_*(S)\otimes_R\bullet$ is so (\ref{b10}, \ref{b20} A), a)), hence if and only if $h_*(S)$ is flat. It commutes with products if and only if $h_*(S)\otimes_R\bullet$ does so (\ref{b20} A), a)), and thus the second claim follows from \ref{a90} b). The last claim follows from b), c), \ref{a05} F) and \ref{a50} B).

c) $\h$ is exact if and only if $\hm{G}{R}{h_*(S)}{\bullet}$ is so (\ref{b10}, \ref{b30} A), a)), hence if and only if $h_*(S)$ is projective (\ref{a50} B)). It commutes with direct sums if and only if $\hm{G}{R}{h_*(S)}{\bullet}$ does so (\ref{b30} A), a)), hence if and only if $h_*(S)$ is small (\ref{a60} A)). The last claim follows from b), c), \ref{a05} F) and \ref{a60} A).
\end{proof}

\begin{cor}
a) $h^*$ is exact, commutes with products, or commutes with projective limits if and only if $(h\ps)^*$ has the same property.

b) $\h$ is exact, commutes with direct sums, or commutes with inductive limits if and only if $\widetilde{h\ps}$ has the same property.
\end{cor}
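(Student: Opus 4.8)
The plan is to reduce both parts to the characterisations of \ref{c79} together with the behaviour of the relevant module-theoretic properties under $\psi$-coarsening established in Section 1. The linchpin is the identity $(h\ps)_*(S\ps)=h_*(S)\ps$ from \ref{b10} (obtained by setting $\bullet=S$), which lets me pass freely between the module $h_*(S)$ that governs $h^*$ and $\h$ and the module $(h\ps)_*(S\ps)$ that governs $(h\ps)^*$ and $\widetilde{h\ps}$.

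For part a), I would first apply \ref{c79} b) to $h$ and then to $h\ps$: the functor $h^*$ is exact (resp.\ commutes with products, resp.\ commutes with projective limits) if and only if $h_*(S)$ is flat (resp.\ of finite presentation, resp.\ projective and of finite type), while $(h\ps)^*$ has the corresponding property if and only if $(h\ps)_*(S\ps)=h_*(S)\ps$ is flat (resp.\ of finite presentation, resp.\ projective and of finite type). It then suffices to observe that each of these three properties is preserved and reflected by coarsening: flatness by \ref{a40} B), finite presentation by \ref{a30}, and the conjunction of projectivity and finite type by \ref{a30} and \ref{a50} B). Chaining these equivalences yields part a).

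Part b) proceeds identically, using \ref{c79} c) in place of \ref{c79} b): the functor $\h$ is exact (resp.\ commutes with direct sums, resp.\ commutes with inductive limits) if and only if $h_*(S)$ is projective (resp.\ small, resp.\ projective and of finite type), and the same criteria applied to $\widetilde{h\ps}$ refer to $h_*(S)\ps$ via \ref{b10}. The needed stability statements are projectivity by \ref{a50} B), smallness by \ref{a60} A), and projectivity-and-finite-type again by \ref{a30} and \ref{a50} B).

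There is no genuine obstacle here; the argument is a bookkeeping exercise matching each of the six functorial properties to the module property that controls it through \ref{c79}, and then invoking the correct coarsening-invariance result from Section 1. The only point requiring a moment's care is to remember to use the identification $(h\ps)_*(S\ps)=h_*(S)\ps$, so that the criterion for the coarsened functor is phrased in terms of the coarsening of $h_*(S)$ rather than an unrelated module.
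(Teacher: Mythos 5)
Your proposal is correct and is precisely the paper's own argument: the paper's proof reads ``Since $h_*$ commutes with $\psi$-coarsening (\ref{b10}), this follows from \ref{a30}, \ref{a40} B), \ref{a50} B), \ref{a60} A), and \ref{c79}'', which is exactly your reduction via $(h\ps)_*(S\ps)=h_*(S)\ps$ combined with the coarsening-invariance of flatness, finite presentation, projectivity, smallness, and finite type. You have merely written out the bookkeeping that the paper leaves implicit.
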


\begin{proof}
Since $h_*$ commutes with $\psi$-coarsening (\ref{b10}), this follows from \ref{a30}, \ref{a40} B), \ref{a50} B), \ref{a60} A), and \ref{c79}.
\end{proof}

\begin{cor}\label{c120}
The following statements are equivalent: (i) $h^*$ has a left adjoint; (ii) $\h$ has a right adjoint; (iii) The $G$-graded $R$-module $h_*(S)$ is projective and of finite type.
\end{cor}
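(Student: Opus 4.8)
The plan is to reduce the statement entirely to the adjoint-existence criterion of \ref{a05} D) combined with the exactness analysis already carried out in \ref{c79}. The key observation is that both $h^*$ and $\h$ are functors \emph{out of} $\grmod^G(R)$, which is an AB5 category with a generator. Hence for each of them \ref{a05} D) applies with $\C=\grmod^G(R)$ and gives a genuine equivalence: having a left (resp.\ right) adjoint is the \emph{same} as commuting with projective (resp.\ inductive) limits. This converts the two adjoint-existence assertions into the two limit-commutation statements that \ref{c79} has already tied to projectivity and finiteness of $h_*(S)$.

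Carrying this out, for the equivalence (i) $\Leftrightarrow$ (iii) I would apply \ref{a05} D) to $h^*\colon\grmod^G(R)\rightarrow\grmod^G(S)$: since the source $\grmod^G(R)$ is AB5 with a generator, $h^*$ has a left adjoint if and only if it commutes with projective limits, and by \ref{c79} b) the latter holds precisely when $h_*(S)$ is projective and of finite type. For (ii) $\Leftrightarrow$ (iii) I would apply \ref{a05} D) to $\h\colon\grmod^G(R)\rightarrow\grmod^G(S)$, whose source is again AB5 with a generator: thus $\h$ has a right adjoint if and only if it commutes with inductive limits, which by \ref{c79} c) is equivalent to $h_*(S)$ being projective and of finite type. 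Chaining the two yields (i) $\Leftrightarrow$ (iii) $\Leftrightarrow$ (ii).

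Essentially all the work sits in the results being invoked, so I expect no serious obstacle; the only points demanding care are bookkeeping ones. First, the converse half of \ref{a05} D) (passing from commutation with limits to the existence of the adjoint, via a special-adjoint-functor-theorem argument) must be applied to the correct category, namely the common source $\grmod^G(R)$, and one should note that this category is AB5 with a generator, as recorded in the preliminaries. Second, it is worth flagging the logical shape of the claim: $h^*$ is already a left adjoint (of $h_*$, by \ref{c20}) and $\h$ is already a right adjoint (of $h_*$), so the content of (i) and (ii) is exactly that each admits an adjoint on the \emph{opposite} side, i.e.\ that the adjoint triple $(h^*,h_*,\h)$ extends one step to the left and to the right under condition (iii).
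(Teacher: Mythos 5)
Your proof is correct and is exactly the paper's argument: the paper's proof reads ``Immediately from \ref{a05} D) and \ref{c79}'', i.e.\ it combines the adjoint-existence criterion for functors out of an AB5 category with a generator with the limit-commutation characterisations in \ref{c79} b) and c), just as you do. Your additional bookkeeping remarks (applying the criterion to the common source $\grmod^G(R)$, and the reading of (i) and (ii) as extending the adjoint triple $(h^*,h_*,\h)$) are accurate and match the paper's framing.
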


\begin{proof}
Immediately from \ref{a05} D) and \ref{c79}.
\end{proof}

\begin{no}\label{c130}
We define functors \[h_+(\bullet)\dfgl h_*(\bullet\otimes_S\h(R))\colon\grmod^G(S)\rightarrow\grmod^G(R)\] and \[h^+(\bullet)\dfgl h_*(\hm{G}{S}{\h(R)}{\bullet})\colon\grmod^G(S)\rightarrow\grmod^G(R).\] By \ref{b50} A), $h_+$ has a right adjoint, namely $\hm{G}{R}{\h(R)}{\bullet}$, and $h^+$ has a left adjoint, namely $\h(R)\otimes_R\bullet$.
\end{no}

\begin{prop}\label{c140}
If $h_*(S)$ is projective and of finite type, then there are adjunctions \[\bigl(\grmod^G(S)\overset{h_+}\longrightarrow\grmod^G(R),\grmod^G(R)\overset{h^*}\longrightarrow\grmod^G(S)\bigr)\] and \[\bigl(\grmod^G(R)\overset{\h}\longrightarrow\grmod^G(S),\grmod^G(S)\overset{h^+}\longrightarrow\grmod^G(R)\bigr).\]
\end{prop}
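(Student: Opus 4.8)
The plan is to deduce both adjunctions from those already recorded in \ref{c130} by means of the uniqueness of adjoints (\ref{a05} B)). By \ref{c130} the functor $h_+$ has right adjoint $\hm{G}{R}{\h(R)}{\bullet}$ and the functor $h^+$ has left adjoint $\h(R)\otimes_R\bullet$. Hence it suffices to exhibit natural isomorphisms of functors \[h^*\cong\hm{G}{R}{\h(R)}{\bullet}\quad\text{and}\quad\h\cong\h(R)\otimes_R\bullet;\] the asserted adjunctions $(h_+,h^*)$ and $(\h,h^+)$ then follow formally, since a functor isomorphic to a one-sided adjoint is again such an adjoint.

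For the first isomorphism I would invoke the morphism $\mu^h$ of \ref{b60} C) with first argument $S$. Since $\hm{G}{R}{S}{R}=\h(R)$, this specialises to a morphism \[\mu^h_{S,\bullet}\colon h^*(\bullet)=S\otimes_R\bullet\longrightarrow\hm{G}{R}{\h(R)}{\bullet}\] in $\grmod^G(S)$ that is natural in its argument. To check that it is an isomorphism I would apply the conservative functor $h_*$ (\ref{b10}): on underlying $G$-graded $R$-modules the source becomes $h_*(S)\otimes_R\bullet$ and the target becomes $\hm{G}{R}{\hm{G}{R}{h_*(S)}{R}}{\bullet}$ (\ref{b20} B), \ref{b30}), and comparison of the defining formulae shows that $h_*(\mu^h_{S,\bullet})=\mu^{\Id_R}_{h_*(S),\bullet}$. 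As $h_*(S)$ is projective and of finite type by hypothesis, this last morphism is an isomorphism by \ref{b80}, whence $\mu^h_{S,\bullet}$ is an isomorphism in $\grmod^G(S)$ by conservativity.

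For the second isomorphism I would invoke the morphism $\nu^h$ of \ref{b60} B) with $L=S$ and $M=R$; composing with the canonical isomorphism $R\otimes_R\bullet\cong\bullet$ gives a natural morphism \[\nu^h_{S,R,\bullet}\colon\h(R)\otimes_R\bullet=\hm{G}{R}{S}{R}\otimes_R\bullet\longrightarrow\hm{G}{R}{S}{\bullet}=\h(\bullet)\] in $\grmod^G(S)$. Here no reduction is needed: since $h_*(S)$ is projective and of finite type, \ref{b70} applies directly and shows that $\nu^h_{S,R,\bullet}$ is an isomorphism. With both natural isomorphisms established, uniqueness of adjoints (\ref{a05} B)) transports the adjunctions of \ref{c130} to the required ones. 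The step demanding genuine care---and hence the main obstacle---is the identification $h_*(\mu^h_{S,\bullet})=\mu^{\Id_R}_{h_*(S),\bullet}$, which lets \ref{b80} be brought to bear; everything else is formal manipulation of results from Sections~2 and~3.
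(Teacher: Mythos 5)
Your proposal is correct and follows essentially the same route as the paper: the paper's proof likewise combines the adjoints recorded in \ref{c130} with the isomorphisms $h_*(\mu^h_{S,\bullet})=\mu^{\Id_R}_{h_*(S),\bullet}$ (an isomorphism by \ref{b80} and conservativity of $h_*$) and $\nu^h_{S,R,\bullet}$ (an isomorphism by \ref{b70}), then transports the adjunctions along these natural isomorphisms. The paper is merely terser, leaving the conservativity step and the uniqueness-of-adjoints argument implicit, both of which you spell out correctly.
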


\begin{proof}
For a $G$-graded $R$-module $M$, the isomorphisms $h_*(\mu^h_{S,M})=\mu^{\Id_R}_{h_*(S),M}$ in $\grmod^G(R)$ and $\nu^h_{S,R,M}$ in $\grmod^G(S)$ (\ref{b70}, \ref{b80}) give rise to isomorphisms \[h^*(M)\overset{\cong}\longrightarrow\hm{G}{R}{\h(R)}{M}\quad\text{and}\quad\h(R)\otimes_RM\overset{\cong}\longrightarrow\h(M)\] in $\grmod^G(S)$ that are natural in $M$. Thus, \ref{c130} yields the desired adjunctions.
\end{proof}

\begin{cor}\label{c150}
There is an isomorphism $h^*\cong\h$ if and only if $h_*(S)$ is projective and of finite type and $\h(R)\cong S$.\footnote{For $G=0$ this is contained in \cite[Theorem 4.1]{morita}.}
\end{cor}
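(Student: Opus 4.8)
The plan is to derive both implications from the two natural isomorphisms constructed in \ref{c140} together with the limit-commutation criteria of \ref{c79}.

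For the implication ``$\Leftarrow$'', I would assume that $h_*(S)$ is projective and of finite type and that $\h(R)\cong S$. The first hypothesis lets me invoke \ref{c140}, which furnishes an isomorphism $h^*(M)\overset{\cong}\longrightarrow\hm{G}{R}{\h(R)}{M}$ in $\grmod^G(S)$, natural in $M$. Since $\h(R)$ and $S$ are isomorphic as $G$-graded $S$-modules and $\hm{G}{R}{\bullet}{\sq}$ is functorial in its first variable on $\grmod^G(S)$, I would replace $\h(R)$ by $S$ to identify this target with $\hm{G}{R}{S}{M}=\h(M)$. Composing yields a natural isomorphism $h^*\cong\h$. (One could equally use the other isomorphism of \ref{c140}, namely $\h(R)\otimes_RM\overset{\cong}\longrightarrow\h(M)$, and substitute $\h(R)\cong S$ to obtain $\h(M)\cong S\otimes_RM=h^*(M)$; either route works.)

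For the implication ``$\Rightarrow$'', I would assume $h^*\cong\h$. The isomorphism $\h(R)\cong S$ is then immediate: by \ref{b20} B) there is a canonical isomorphism $h^*(R)\cong S$, whence $\h(R)\cong h^*(R)\cong S$. To recover that $h_*(S)$ is projective and of finite type, I would exploit the asymmetry recorded in \ref{c79}: the functor $\h$ always commutes with projective limits (\ref{c79} a)), so under the isomorphism $h^*\cong\h$ the functor $h^*$ must commute with projective limits as well; by \ref{c79} b) this is equivalent to $h_*(S)$ being projective and of finite type. (Symmetrically, $h^*$ always commutes with inductive limits, hence so does $\h$, and then \ref{c79} c) gives the same conclusion.)

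I do not expect a genuine obstacle here, as the corollary is essentially a formal consequence of \ref{c140} and \ref{c79}. The only step demanding slight care is the substitution in the ``$\Leftarrow$'' direction: one must insert the hypothesis $\h(R)\cong S$ into the first argument of the graded Hom bifunctor, which is legitimate precisely because that argument ranges over $\grmod^G(S)$ and the isomorphism $\h(R)\cong S$ holds there, and not merely as an isomorphism of $G$-graded $R$-modules.
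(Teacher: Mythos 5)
Your proof is correct and takes essentially the same route as the paper: sufficiency via the natural isomorphisms underlying \ref{c140} (together with \ref{c130}) and substitution of $\h(R)\cong S$ in the first (that is, $\grmod^G(S)$-valued) argument, and necessity via $\h(R)\cong h^*(R)\cong S$ (\ref{b20} B)) together with transferring a property that $\h$ always enjoys across the isomorphism $h^*\cong\h$. The only cosmetic difference is that for necessity you invoke \ref{c79} (commutation with projective limits) directly, whereas the paper cites \ref{c20} and \ref{c120} (existence of adjoints); since \ref{c120} is itself an immediate consequence of \ref{a05} D) and \ref{c79}, the two arguments coincide.
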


\begin{proof}
Necessity follows from \ref{c20} and \ref{c120} since $h^*(R)\cong S$ (\ref{b20} B)). Sufficiency follows from \ref{c130} and \ref{c140}. 
\end{proof}


\section{Interplay with tensor and Hom, and epimorphisms}

\begin{no}\label{d10}
A) Let $M$ and $N$ be $G$-graded $R$-modules. There is a morphism \[\delta^h_{M,N}\colon h^*(M)\otimes_Sh^*(N)\rightarrow h^*(M\otimes_RN)\] in $\grmod^G(S)$ with $(r\otimes x)\otimes(s\otimes y)\mapsto(rs)\otimes(x\otimes y)$ for $r,s\in S^\hme$, $x\in M^\hme$ and $y\in N^\hme$, and this is natural in $M$ and $N$. Moreover, there is a morphism \[\overline{\delta}^h_{M,N}\colon h^*(M\otimes_RN)\rightarrow h^*(M)\otimes_Sh^*(N)\] in $\grmod^G(S)$ with $s\otimes(x\otimes y)\mapsto s((1_S\otimes x)\otimes(1_S\otimes y))$ for $s\in S^\hme$, $x\in M^\hme$ and $y\in N^\hme$. As $\delta^h_{M,N}$ and $\overline{\delta}^h_{M,N}$ are mutually inverse, we have an isomorphism \[\delta^h\colon h^*(\bullet)\otimes_Sh^*(\sq)\overset{\cong}\longrightarrow h^*(\bullet\otimes_R\sq).\]

B) Since $\psi$-coarsening commutes with tensor products and scalar extension (\ref{a40} A), \ref{b20} B)), we have a commutative diagram \[\xymatrix@C60pt@R15pt{(h^*(M)\otimes_Sh^*(N))\ps\ar[r]_\cong^{(\delta^h_{M,N})\ps}\ar[d]_\cong&h^*(M\otimes_RN)\ps\ar[d]^\cong\\(h\ps)^*(M\ps)\otimes_{S\ps}(h\ps)^*(N\ps)\ar[r]_(.55)\cong^(.54){\delta^{h\ps}_{M\ps,N\ps}}&(h\ps)^*(M\ps\otimes_{R\ps}N\ps)}\] in $\grmod^H(S\ps)$, where the vertical morphisms are the canonical ones.
\end{no}

\begin{no}\label{d20}
A) Let $M$ and $N$ be $G$-graded $S$-modules. There is an epimorphism \[\gamma^h_{M,N}\colon h_*(M)\otimes_Rh_*(N)\twoheadrightarrow h_*(M\otimes_SN)\] in $\grmod^G(R)$ with $x\otimes y\mapsto x\otimes y$ for $x\in h_*(M)^\hme$ and $y\in h_*(N)^\hme$, and this is natural in $M$ and $N$. Therefore, we have an epimorphism \[\gamma^h\colon h_*(\bullet)\otimes_Rh_*(\sq)\twoheadrightarrow h_*(\bullet\otimes_S\sq).\] Furthermore, we have $\gamma^h_{S,N}=h_*(\sigma^h_N)$ (\ref{c10} A)).\smallskip

B) Since $\psi$-coarsening commutes with tensor products and scalar restriction (\ref{a40} A), \ref{b10}), we have a commutative diagram \[\xymatrix@C60pt@R15pt{(h_*(M)\otimes_Rh_*(N))\ps\ar@{->>}[r]^(.53){(\gamma^h_{M,N})\ps}\ar[d]_\cong&h_*(M\otimes_SN)\ps\ar[d]^\cong\\(h\ps)_*(M\ps)\otimes_{R\ps}(h\ps)_*(N\ps)\ar@{->>}[r]^(.54){\gamma^{h\ps}_{M\ps,N\ps}}&(h\ps)_*(M\ps\otimes_{S\ps}N\ps)}\] in $\grmod^H(R\ps)$, where the vertical morphisms are the canonical ones.
\end{no}

\begin{no}\label{d25}
A) Let $M$ and $N$ be $G$-graded $R$-modules. There is a morphism \[\eps^h_{M,N}\colon\h(M)\otimes_S\h(N)\rightarrow\h(M\otimes_RN)\] in $\grmod^G(S)$ with $u\otimes v\mapsto(s\mapsto u(s)\otimes v(1_S))$ for $u\in\h(M)^\hme$, $v\in\h(N)^\hme$ and $s\in S^\hme$, and this is natural in $M$ and $N$. Therefore, we have a morphism \[\eps^h\colon\h(\bullet)\otimes_S\h(\sq)\rightarrow\h(\bullet\otimes_R\sq).\]

B) Since $\psi$-coarsening commutes with tensor products (\ref{a40} A)) it follows from \ref{b30} A) that we have a commutative diagram \[\xymatrix@C50pt@R15pt{(\h(M)\otimes_S\h(N))\ps\ar[r]^(.53){(\eps^h_{M,N})\ps}\ar@{ >->}[d]&\h(M\otimes_RN)\ps\ar@{ >->}[d]\\\widetilde{h\ps}(M\ps)\otimes_{S\ps}\widetilde{h\ps}(N\ps)\ar[r]^(.53){\eps^{h\ps}_{M\ps,N\ps}}&\widetilde{h\ps}(M\ps\otimes_{R\ps}N\ps)}\] in $\grmod^H(S\ps)$, where the vertical morphisms are induced by $\beta^{\psi,h}_{S,M}\otimes_{S\ps}\beta^{\psi,h}_{S,N}$ and\linebreak $\beta^{\psi,h}_{S,M\otimes_RN}$. If $\ke(\psi)$ is finite or $h_*(S)$ is small, then the vertical morphisms are isomorphisms.\smallskip

C) On use of the symmetry of tensor products and the canonical isomorphism\linebreak $\hm{G}{S}{S}{\bullet}\cong\Id_{\grmod^G(S)}$ it is readily checked that there is a commutative diagram \[\xymatrix{\h(M)\otimes_S\h(N)\ar[rr]^{\eps^h_{M,N}}\ar[rd]_{\pi^h_{S,M,\h(N)}}&&\h(M\otimes_RN)\\&M\otimes_R\h(N)\ar[ru]_{\nu^h_{S,N,M}}&}\] in $\grmod^G(S)$.\smallskip

D) The morphism $\eps^h$ need not be an epimorphism. Indeed, let $R$ be a finite nonzero ring, and let $h\colon R\rightarrow R[X]$ be the polynomial algebra in one indeterminate over $R$. The $R$-module $R^{\oplus\N}$ is countable but not of finite type. The first part of the proof of \ref{a90} a) shows that the canonical morphism of $R$-modules $\kappa_{R^{\oplus\N},(R)_{i\in\N}}\colon(R^\N)\otimes_R(R^{\oplus\N})\rightarrow (R^{\oplus\N})^\N$ (\ref{a70} A)) is not an epimorphism. But up to the isomorphism $\hm{G}{R}{R[X]}{\bullet}\cong\bullet^\N$, this morphism equals $\nu^h_{R[X],R,R^{\oplus\N}}$, and thus C) implies that $\eps^h_{R^{\oplus\N},R}$ is not an epimorphism.\smallskip

E) The morphism $\eps^h$ need not be a monomorphism. Indeed, let $K$ be a field, let $R\dfgl K[X]/\langle X^3\rangle$, let $Y$ denote the canonical image of $X$ in $Y$, let $\ia\dfgl\langle Y^2\rangle_R$, and let $h\colon R\twoheadrightarrow R/\ia$ be the canonical projection. Then, $\pi^h_{R/\ia,R,\h(R/\ia)}$ takes the form $(0:_R\ia)\rightarrow R/\ia,\;x\mapsto x+\ia$, hence maps $Y\neq 0$ to $0$, and thus is not a monomorphism. Therefore, C) implies that $\eps^h_{R,R/\ia}$ is not a monomorphism either.
\end{no}

\begin{no}\label{d30}
A) Let $M$ and $N$ be $G$-graded $S$-modules. There is a monomorphism of $G$-graded $R$-modules \[\eta^h_{M,N}\colon h_*(\hm{G}{S}{M}{N})\rightarrowtail\hm{G}{R}{h_*(M)}{h_*(N)},\;u\mapsto h_*(u)\] in $\grmod^G(R)$, and this is natural in $M$ and $N$. Therefore, we have a monomorphism \[\eta^h\colon h_*(\hm{G}{S}{\bullet}{\sq})\rightarrowtail\hm{G}{R}{h_*(\bullet)}{h_*(\sq)}.\] Furthermore, identifying $N$ and $\hm{G}{S}{S}{N}$ by means of the canonical isomorphism we have $\eta^h_{S,N}=h_*(\widetilde\rho^h_N)$ (\ref{c10} A)).\footnote{Since $h_*$ is faithful (\ref{b10}) and $\eta^h$ is a monomorphism, this implies again that $\widetilde\rho^h$ is a monomorphism (\ref{c40}).}\smallskip

B) Since $\psi$-coarsening commutes with scalar restriction (\ref{b10}) it follows from \ref{a50} A) and \ref{c79} a) that we have a commutative diagram \[\xymatrix@C60pt{h_*(\hm{G}{S}{M}{N})\ps\ar@{ >->}[r]^{(\eta^h_{M,N})\ps}\ar@{ >->}[d]_{(h\ps)_*(\beta^\psi_{M,N})}&\hm{G}{R}{h_*(M)}{h_*(N)}\ps\ar@{ >->}[d]^{\beta^\psi_{h_*(M),h_*(N)}}\\(h\ps)_*(\hm{H}{S\ps}{M\ps}{N\ps})\ar@{ >->}[r]^(.45){\eta^{h\ps}_{M\ps,N\ps}}&\hm{H}{R\ps}{(h\ps)_*(M\ps)}{(h\ps)_*(N\ps)}}\] in $\grmod^H(R\ps)$. If $\ke(\psi)$ is finite or $h_*(M)$ is small, then both vertical morphisms are isomorphisms (\ref{a50} A), \ref{c70} B)).
\end{no}

\begin{no}\label{d40}
A) Let $M$ and $N$ be $G$-graded $R$-modules. There is a morphism \[\thet^h_{M,N}\colon h^*(\hm{G}{R}{M}{N})\rightarrow\hm{G}{S}{h^*(M)}{h^*(N)}\] in $\grmod^G(S)$ with $s\otimes u\mapsto(r\otimes x\mapsto(rs)\otimes u(x))$ for $r,s\in S^\hme$, $u\in\hm{G}{R}{M}{N}^\hme$ and $x\in M^\hme$, and this is natural in $M$ and $N$. Therefore, we have a morphism \[\thet^h\colon h^*(\hm{G}{R}{\bullet}{\sq})\rightarrow\hm{G}{S}{h^*(\bullet)}{h^*(\sq)}.\]

B) Since $\psi$-coarsening commutes with scalar extension (\ref{b20} B)) it follows from \ref{a50} A) that we have a commutative diagram \[\xymatrix@C60pt{h^*(\hm{G}{R}{M}{N})\ps\ar[r]^(.48){(\thet^h_{M,N})\ps}\ar[d]_{(h\ps)^*(\beta^\psi_{M,N})}&\hm{G}{S}{h^*(M)}{h^*(N)}\ps\ar@{ >->}[d]^{\beta^\psi_{h^*(M),h^*(N)}}\\(h\ps)^*(\hm{H}{R\ps}{M\ps}{N\ps})\ar[r]^(.45){\thet^{h\ps}_{M\ps,N\ps}}&\hm{H}{S\ps}{(h\ps)^*(M\ps)}{(h\ps)^*(N\ps)}}\] in $\grmod^H(S\ps)$. If $\ke(\psi)$ is finite or $M$ is small, then the vertical morphisms are isomorphisms (\ref{a50} A), \ref{c70} A)).\smallskip

C) The morphism of functors $\thet^h$ need be neither a mono- nor an epimorphism. For an (ungraded) counterexample (cf. \cite[II.5 Exercice 1]{a}), let $R=\Z/4\Z$ and $S=\Z/2\Z$, and let $h\colon R\rightarrow S$ denote the canonical projection with kernel $\mathfrak{a}=2\Z/4\Z$. Consider the $R$-modules $M=h_*(S)$ and $N=R$. Then, $h^*(N)\cong S$ (\ref{b20} B)), and since $\mathfrak{a}S=0$ we have $h^*(M)\cong S$. It follows that $\hm{G}{S}{h^*(M)}{h^*(N)}\cong\hm{G}{S}{S}{S}\cong S\neq 0$. There is a non-zero morphism of $R$-modules $u\colon h_*(S)\rightarrow R$ with $u(\overline{1})=\overline{2}$, and we have $\hm{G}{R}{M}{N}=\{0,u\}\neq 0$. Since $\mathfrak{a}u=0$ it follows that $h^*(\hm{G}{R}{M}{N})\cong S\neq 0$. Now, we have $\thet^h_{M,N}(1_S\otimes u)(1_S\otimes\overline{1})=h(u(\overline{1}))=h(\overline{2})=\overline{0}$, hence $\thet^h_{M,N}(u\otimes 1_S)=0$, and thus $\thet^h_{M,N}=0$. In particular, $\thet^h_{M,N}$ is neither a mono- nor an epimorphism.
\end{no}

\begin{prop}\label{d50}
Let $M$ and $N$ be $G$-graded $R$-modules. Then, $\thet^h_{M,N}$ is a mono-, epi- or isomorphism if and only if $\nu^{\Id_R}_{M,N,h_*(S)}$ is so.
\end{prop}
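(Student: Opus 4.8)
The plan is to apply the scalar restriction functor $h_*$ to $\thet^h_{M,N}$, to identify the resulting morphism with $\nu^{\Id_R}_{M,N,h_*(S)}$ up to canonical isomorphisms, and then to use that $h_*$ preserves and reflects mono-, epi- and isomorphisms. The latter is immediate from \ref{b10}: since $h_*$ leaves the underlying map of a morphism unchanged, and since a morphism in $\grmod^G(R)$ or $\grmod^G(S)$ is a mono-, epi- or isomorphism if and only if its underlying map is injective, surjective or bijective, a morphism $u$ in $\grmod^G(S)$ has one of these three properties if and only if $h_*(u)$ does.

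First I would describe the source and target of $h_*(\thet^h_{M,N})$. By \ref{b20} B) its source $h_*(h^*(\hm{G}{R}{M}{N}))$ equals $h_*(S)\otimes_R\hm{G}{R}{M}{N}$, which by symmetry of the tensor product is canonically isomorphic to $\hm{G}{R}{M}{N}\otimes_Rh_*(S)$, the source of $\nu^{\Id_R}_{M,N,h_*(S)}$. By \ref{c25} and \ref{b20} B) its target $h_*(\hm{G}{S}{h^*(M)}{h^*(N)})$ is canonically isomorphic to $\hm{G}{R}{M}{h_*(h^*(N))}=\hm{G}{R}{M}{h_*(S)\otimes_RN}$, which by symmetry is canonically isomorphic to $\hm{G}{R}{M}{N\otimes_Rh_*(S)}$, the target of $\nu^{\Id_R}_{M,N,h_*(S)}$.

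The heart of the argument is then to establish the commutative square
\[\xymatrix@C60pt{
h_*(h^*(\hm{G}{R}{M}{N}))\ar[r]^{h_*(\thet^h_{M,N})}\ar[d]_\cong&h_*(\hm{G}{S}{h^*(M)}{h^*(N)})\ar[d]^\cong\\
\hm{G}{R}{M}{N}\otimes_Rh_*(S)\ar[r]^{\nu^{\Id_R}_{M,N,h_*(S)}}&\hm{G}{R}{M}{N\otimes_Rh_*(S)}}\]
in $\grmod^G(R)$, whose vertical morphisms are the canonical isomorphisms just described. Commutativity is verified on a homogeneous element $s\otimes u$ of $h_*(S)\otimes_R\hm{G}{R}{M}{N}$: by the description of $\thet^h$ in \ref{d40} A), $\thet^h_{M,N}(s\otimes u)$ is the morphism $r\otimes x\mapsto(rs)\otimes u(x)$; unwinding the adjunction isomorphism of \ref{c25}, which is induced by the unit $\rho^h$ and so is given by precomposition with $x\mapsto 1_S\otimes x$, this corresponds to the homomorphism $x\mapsto s\otimes u(x)$. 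Under the symmetry isomorphism this matches $\nu^{\Id_R}_{M,N,h_*(S)}(u\otimes s)=(x\mapsto u(x)\otimes s)$, the latter formula being read off from the definition of $\nu$ via $\pi$ in \ref{b60}. This element-level compatibility is the step I expect to be the main obstacle; once the unit $\rho^h$ is substituted into the adjunction identification of \ref{c25} it becomes a direct computation, but keeping the two identifications (the one from \ref{b20} B) and the one from \ref{c25}) straight is where care is required.

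Finally, combining the commutative square with the detection property recorded in the first paragraph, $\thet^h_{M,N}$ is a mono-, epi- or isomorphism if and only if $h_*(\thet^h_{M,N})$ is, hence if and only if $\nu^{\Id_R}_{M,N,h_*(S)}$ is.
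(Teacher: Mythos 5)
Your proposal is correct and follows essentially the same route as the paper: the paper's proof applies $h_*$, uses \ref{c25}, \ref{b20} B) and the symmetry of tensor products to obtain exactly the commutative square you describe, and then concludes from the fact that $h_*$ is faithful and conservative (\ref{b10}). Your element-level verification of the square and your justification that $h_*$ detects mono-, epi- and isomorphisms via underlying maps simply make explicit what the paper leaves to the reader.
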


\begin{proof}
By \ref{c25} there is an isomorphism \[h_*(\hm{G}{S}{h^*(M)}{h^*(N)})\overset{\cong}\longrightarrow\hm{G}{R}{M}{h_*(h^*(N))}\] in $\grmod^G(R)$. So, by \ref{b20} B) and the symmetry of tensor products we have a commutative diagram \[\xymatrix@C60pt{h_*(h^*(\hm{G}{R}{M}{N}))\ar[r]^(.47){h_*(\thet^h_{M,N})}\ar[d]_\cong&h_*(\hm{G}{S}{h^*(M)}{h^*(N)})\ar[d]^\cong\\\hm{G}{R}{M}{N}\otimes_Rh_*(S)\ar[r]^{\nu^{\Id_R}_{M,N,h_*(S)}}&\hm{G}{R}{M}{N\otimes_Rh_*(S)}.}\] Since $h_*$ is faithful and conservative (\ref{b10}) this yields the claim.
\end{proof}

\begin{cor}
a) If $h_*(S)$ is projective, then $\thet^h$ is a monomorphism. If $h_*(S)$ is projective and of finite type, then $\thet^h$ is an isomorphism.

b) Let $M$ be a $G$-graded $R$-module. If $M$ is projective and of finite type, or if $M$ is small and $h_*(S)$ is projective, then $\thet^h_{M,N}$ is an isomorphism for every $G$-graded $R$-module $N$.\footnote{For $G=0$, b) generalises \cite[II.5.3 Proposition 7]{a}.}
\end{cor}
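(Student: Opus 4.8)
The plan is to reduce every assertion to Proposition \ref{d50}, which identifies the mono-, epi- or isomorphism behaviour of $\thet^h_{M,N}$ with that of the morphism $\nu^{\Id_R}_{M,N,h_*(S)}$, and then to read off the required properties of the latter from Proposition \ref{b70}. Once \ref{d50} is invoked there is no further computation to do and no need to return to the explicit formula for $\thet^h$ in \ref{d40} A); the whole content of the corollary becomes four instances of \ref{b70}. The only point requiring care is matching the three module arguments of $\nu$ to the hypotheses of \ref{b70}, so I would set up this bookkeeping explicitly first.

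To that end I would specialise \ref{b70} to the morphism $\Id_R$, so that $S=R$, the scalar restriction $(\Id_R)_*=\Id$, and all three arguments of $\nu^{\Id_R}$ are $G$-graded $R$-modules. In this form \ref{b70} reads: for $A,B,C\in\grmod^G(R)$ the morphism $\nu^{\Id_R}_{A,B,C}$ is a monomorphism if $C$ is projective, and an isomorphism if $A$ or $C$ is projective and of finite type, or if $A$ is small and $C$ is projective (here the conditions on $h_*(L)$ in \ref{b70} have become conditions on $A$ itself, since $(\Id_R)_*=\Id$). I would then substitute $A=M$, $B=N$, $C=h_*(S)$, so that the hypotheses on the first argument concern $M$ and those on the third argument concern $h_*(S)$.

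For part a), if $h_*(S)$ is projective then the third argument of $\nu^{\Id_R}_{M,N,h_*(S)}$ is projective, so the first assertion of \ref{b70} makes this a monomorphism; as $M$ and $N$ are arbitrary, \ref{d50} gives that $\thet^h$ is a monomorphism. If moreover $h_*(S)$ is of finite type, the third argument is projective and of finite type, hence $\nu^{\Id_R}_{M,N,h_*(S)}$ is an isomorphism by the second assertion of \ref{b70}, and \ref{d50} yields that $\thet^h$ is an isomorphism. For part b), if $M$ is projective and of finite type then the first argument is projective and of finite type, while if $M$ is small and $h_*(S)$ is projective then the first argument is small and the third is projective; in either case the second assertion of \ref{b70} makes $\nu^{\Id_R}_{M,N,h_*(S)}$ an isomorphism for every $N$, so $\thet^h_{M,N}$ is an isomorphism for every $N$ by \ref{d50}.

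I expect no genuine obstacle here: the statement is a formal consequence of \ref{d50} and \ref{b70}, and the only thing that could go wrong is a mismatch in the roles of the three slots of $\nu$. Fixing the substitution $(L,-,-)=(M,N,h_*(S))$ at the outset makes each of the four claims a direct quotation of \ref{b70}, so the argument is short and self-checking.
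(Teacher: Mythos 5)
Your proposal is correct and is exactly the paper's argument: the paper's proof is the one-line ``Immediately from \ref{b70} and \ref{d50}'', i.e.\ precisely the reduction via \ref{d50} to $\nu^{\Id_R}_{M,N,h_*(S)}$ and the four corresponding instances of \ref{b70}. Your explicit bookkeeping of the three slots of $\nu$ (first argument $M$, third argument $h_*(S)$) is the only content the paper leaves implicit, and you have matched it correctly.
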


\begin{proof}
a) Immediately from \ref{b70} and \ref{d50}.
\end{proof}

\begin{thm}\label{d70}
The following statements are equivalent:\/\footnote{For $G=0$, the implication (i)$\Rightarrow$(v) generalises \cite[II.3.3 Proposition 2 Corollaire]{a} and \cite[II.2.7 Proposition 18]{ac}, while the equivalence (i)$\Leftrightarrow$(ii) is contained in \cite[Th\'eor\`eme 1]{roby}.} (i) $h$ is an epimorphism; (ii) $\sigma^h_S$ is an isomorphism; (iii) $\sigma^h$ is an isomorphism; (iv) $\widetilde{\rho}^h$ is an isomorphism;\linebreak (v) $\gamma^h$ is an isomorphism; (vi) $\eta^h$ is an isomorphism; (vii) $\eta^h_{S,\bullet}$ is an isomorphism.
\end{thm}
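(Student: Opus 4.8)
The plan is to prove the seven conditions equivalent by splitting them into the two clusters $\{(\mathrm{i}),(\mathrm{ii}),(\mathrm{iii}),(\mathrm{v})\}$ and $\{(\mathrm{iv}),(\mathrm{vi}),(\mathrm{vii})\}$, each handled by a ``generator argument'', and joining the clusters through the single categorical observation that both (iii) and (iv) say that $h_*$ is fully faithful. I would begin with (i)$\Leftrightarrow$(ii). By the description in \ref{c10} A), the counit component $\sigma^h_S\colon h^*(h_*(S))\cong S\otimes_RS\rightarrow S$ is exactly the multiplication map of $S\otimes_RS$. Since the $G$-graded tensor product $S\otimes_RS$ is the coproduct of $S$ with itself in the category of $G$-graded $R$-algebras, its two coprojections $\iota_1,\iota_2\colon S\rightarrow S\otimes_RS$ both split $\sigma^h_S$, and a purely categorical argument shows that $h$ is an epimorphism if and only if $\iota_1=\iota_2$, which holds if and only if $\sigma^h_S$ is an isomorphism. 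This is the graded analogue of Roby's Théorème 1 (\cite{roby}).

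Next I would treat the cluster (ii)$\Leftrightarrow$(iii)$\Leftrightarrow$(v). Both $h^*\circ h_*$ and $\Id_{\grmod^G(S)}$ commute with inductive limits (\ref{c79} a), \ref{b20}), and $\sigma^h$ is a morphism of functors between them; since the shifts $S(g)$ form a generating family and $\sigma^h_{S(g)}=\sigma^h_S(g)$ (\ref{c10} A)), choosing a free presentation of an arbitrary $G$-graded $S$-module and applying the Five Lemma upgrades (ii) to (iii). The identical scheme applies to $\gamma^h_{\bullet,N}$, whose source $h_*(\bullet)\otimes_Rh_*(N)$ and target $h_*(\bullet\otimes_SN)$ both commute with inductive limits in the first variable; because $\gamma^h_{S,N}=h_*(\sigma^h_N)$ (\ref{d20} A)), condition (iii) makes $\gamma^h_{S(g),N}$ an isomorphism, whence $\gamma^h$ is an isomorphism, giving (iii)$\Rightarrow$(v). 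Finally (v)$\Rightarrow$(ii) is immediate: $\gamma^h_{S,S}=h_*(\sigma^h_S)$ is then an isomorphism and $h_*$ is conservative (\ref{b10}).

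For the second cluster I would first observe that (iii) and (iv) assert that the counit of $(h^*,h_*)$ and the unit of $(h_*,\h)$ (\ref{c20}) are isomorphisms; by the standard property of adjunctions that a right adjoint is fully faithful if and only if its counit is an isomorphism, and dually for the unit of a left adjoint, both are equivalent to $h_*$ being fully faithful, so (iii)$\Leftrightarrow$(iv) (recall $h_*$ is faithful by \ref{b10}). To pass from (iv) to (vi) I would run the generator argument contravariantly: as functors of the first variable, both $h_*(\hm{G}{S}{\bullet}{\sq})$ and $\hm{G}{R}{h_*(\bullet)}{h_*(\sq)}$ turn inductive limits into projective limits (Hom out of a coproduct is a product, and $h_*$ commutes with projective limits by \ref{c79} a)), and $\eta^h_{S,N}=h_*(\widetilde\rho^h_N)$ (\ref{d30} A)) is an isomorphism under (iv); a free presentation of the first argument then forces $\eta^h_{M,N}$ to be an isomorphism by comparing kernels. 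The implication (vi)$\Rightarrow$(vii) is trivial, and (vii)$\Rightarrow$(iv) follows from $\eta^h_{S,\bullet}=h_*(\widetilde\rho^h_\bullet)$ and the conservativity of $h_*$. The main obstacle I anticipate is not any single deep step but the bookkeeping in the two generator arguments: one must verify that the relevant composites genuinely preserve the appropriate (co)limits and that $\sigma^h$, $\gamma^h$ and $\eta^h$ are compatible with shifts, and the case of $\eta^h$ must be run in its contravariant form. The conceptual heart remains (i)$\Leftrightarrow$(ii), whose only real input is that the graded tensor product is the coproduct of graded algebras.
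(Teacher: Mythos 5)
Your proposal is correct, and while it shares the paper's basic architecture (reduce everything to the special components at $S$, then propagate by generator arguments), it differs from the paper's proof at two genuine points. First, you prove (ii)$\Rightarrow$(i) directly, via the cokernel-pair characterisation of epimorphisms and the fact that $S\otimes_RS$ is the pushout of $h$ along itself in the category of $G$-graded rings. The paper proves (i)$\Rightarrow$(ii) exactly as you do (the two coprojections coincide, hence $\ke(\sigma^h_S)=0$, and $\sigma^h_S$ is already an epimorphism by \ref{c40}), but it closes the cycle via (iii)$\Rightarrow$(i) by coarsening to $H=0$ and citing Roby's ungraded theorem \cite{roby}; your argument makes the graded statement independent of the ungraded one, in effect reproving Roby rather than quoting him. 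Second, you join the two clusters by the standard adjunction fact that a right adjoint is fully faithful iff the counit is an isomorphism (dually for the unit of a left adjoint), applied to the two adjunctions of \ref{c20}, which share $h_*$; this gives (iii)$\Leftrightarrow$(iv) in one stroke. The paper never invokes this fact (it is not among its deliberately minimal preliminaries \ref{a05}); instead it proves (iii)$\Leftrightarrow$(vi)/(vii) through an explicit Hom-diagram built from the adjunction isomorphism --- which is secretly the same full-faithfulness statement --- and gets (iv)$\Leftrightarrow$(vii) from $\eta^h_{S,\bullet}=h_*(\widetilde{\rho}^h_\bullet)$ and conservativity of $h_*$, exactly as you do for (vii)$\Rightarrow$(iv). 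The remaining differences are cosmetic: for (ii)$\Rightarrow$(iii) the paper exploits the triangle identity $h_*(\sigma^h_M)\circ\rho^h_{h_*(M)}=\Id_{h_*(M)}$, so that only an epimorphism from a free module (rather than a full presentation plus the Five Lemma) is needed, and for (iii)$\Rightarrow$(v) it writes down an explicit diagram involving $\delta^h$ and associativity of tensor products instead of running your generator argument a third time. The bookkeeping you flag (compatibility of $\sigma^h$, $\gamma^h$, $\eta^h$ with shifts and (co)limits) is genuine but routine, and is exactly what the paper's references to \ref{c10} A), \ref{b40} and \ref{c79} a) supply. In sum: the paper's route stays inside its declared categorical toolkit and reuses its coarsening machinery; yours buys a self-contained graded proof of the epimorphism characterisation and a more conceptual bridge between the two adjunctions.
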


\begin{proof}
``(i)$\Rightarrow$(ii)'': Suppose that $h$ is an epimorphism. The morphisms of $G$-graded $R$-algebras $S\rightarrow S\otimes_RS,\;s\mapsto s\otimes 1_S$ and $S\rightarrow S\otimes_RS,\;s\mapsto 1_S\otimes s$ coincide, as their compositions with $h$ coincide. Thus, $\ke(\sigma^h_S)=\langle s\otimes 1_S-1_S\otimes s\mid s\in S^\hme\rangle_S=0$, hence $\sigma^h_S$ is a mono- and therefore an isomorphism (\ref{c40}).

``(ii)$\Rightarrow$(iii)'': For a $G$-graded $S$-module $M$ we have $h_*(\sigma^h_M)\circ\rho^h_{h_*(M)}=\Id_{h_*(M)}$ (\ref{c20}). So, since $h_*$ is conservative (\ref{b10}), $\sigma^h_M$ is an isomorphism if and only if $\rho^h_{h_*(M)}$ is an epimorphism. Suppose now that $\sigma^h_S$ is an isomorphism. Then, $\rho^h_{h_*(S)(g)}$ is an epimorphism for every $g\in G$ (\ref{c10} A)). Let $M$ be a $G$-graded $S$-module. There exist a family $(g_i)_{i\in I}$ in $G$ and an epimorphism $p\colon\bigoplus_{i\in I}S(g_i)\twoheadrightarrow M$ in $\grmod^G(S)$. Keeping in mind \ref{b40} and \ref{c79} a) we get a commutative diagram \[\xymatrix@C80pt{\bigoplus_{i\in I}h_*(S)(g_i)\ar@{->>}[r]^(.42){\bigoplus_{i\in I}\rho^h_{h_*(S)(g_i)}}\ar[d]_\cong&\bigoplus_{i\in I}(h_*(S)\otimes_Rh_*(S)(g_i))\ar[d]^\cong\\h_*(\bigoplus_{i\in I}S(g_i))\ar[r]^(.43){\rho^h_{h_*(\bigoplus_{i\in I}S(g_i))}}\ar@{->>}[d]_{h_*(p)}&h_*(S)\otimes_Rh_*(\bigoplus_{i\in I}S(g_i))\ar@{->>}[d]^{h_*(h^*(h_*(p)))}\\h_*(M)\ar[r]^(.45){\rho^h_{h_*(M)}}&h_*(S)\otimes_Rh_*(M)}\] in $\grmod^G(R)$, where the unmarked morphisms are the canonical ones. It follows that $\rho^h_{h_*(M)}$ is an epimorphism, and thus $\sigma^h_M$ is an isomorphism.

``(iii)$\Rightarrow$(i)'': If $\sigma^h$ is an isomorphism then so is $\sigma^{h_{[0]}}_{S_{[0]}}=(\sigma^h_S)_{[0]}$ (\ref{c10} B)), thus $h_{[0]}$ is an epimorphism by the corresponding ungraded statement (\cite[Th\'eor\`eme 1]{roby}), and therefore $h$ is an epimorphism.

``(iii)$\Rightarrow$(v)'': Let $M$ and $N$ be $G$-graded $S$-modules. Keeping in mind \ref{b20} B) and the associativity of tensor products we get a commutative diagram \[\xymatrix@C80pt{h_*(M)\otimes_Rh_*(N)\ar[r]^{\gamma^h_{M,N}}&h_*(M\otimes_RN)\\h_*(h^*(h_*(M)))\otimes_Rh_*(N)\ar[u]^{h_*(\sigma^h_M)\otimes_Rh_*(N)}&h_*(h^*(h_*(M))\otimes_Sh^*(h_*(N)))\ar[u]_{h_*(\sigma^h_M\otimes_S\sigma^h_N)}\\h_*(S)\otimes_Rh_*(M)\otimes_Rh_*(N)\ar[u]^\cong\ar[r]^\cong&h_*(h^*(h_*(M)\otimes_Rh_*(N)))\ar[u]^\cong_{h_*(\delta^h_{h_*(M),h_*(N)})}}\] in $\grmod^G(R)$, where the unmarked morphisms are the canonical ones. If $\sigma^h$ is an isomorphism, then so are all the vertical morphisms in the above diagram, and thus $\gamma^h_{M,N}$ is an isomorphism, too.

``(v)$\Rightarrow$(iii)'': If $\gamma^h$ is an isomorphism, then so is $\gamma^h_{S,N}=h_*(\sigma^h_N)$ for every $G$-graded $S$-module $N$ (\ref{d20} A)). As $h_*$ is conservative (\ref{b10}) it follows that $\sigma^h$ is an isomorphism.

``(iii)$\Leftrightarrow$(vii)'': For $G$-graded $S$-modules $M$ and $N$ we have a commutative diagram \[\xymatrix{\hm{}{\grmod^G(S)}{M}{N}\ar[r]^(.44){u\mapsto h_*(u)}\ar[d]_{\hm{}{\grmod^G(S)}{\sigma^h_M}{N}}&\hm{}{\grmod^G(R)}{h_*(M)}{h_*(N)}\\\hm{}{\grmod^G(S)}{h^*(h_*(M))}{N}\ar[ru]_\cong&}\] in $\ab$, where the unmarked morphism is given by the adjunction $(h^*,h_*)$ (\ref{c20}). The morphism $\sigma^h$ is an isomorphism if and only if $\hm{}{\grmod^G(S)}{\sigma^h_M}{N}$ is an isomorphism for all $G$-graded $S$-modules $M$ and $N$, hence if and only if the horizontal morphism in the above diagram is an isomorphism for all $G$-graded $S$-modules $M$ and $N$. By \ref{b40} this is equivalent to $\eta^h$ being an isomorphism.

``(iv)$\Leftrightarrow$(vii)'': This follows from the facts that $\eta^h_{S,N}=h_*(\widetilde\rho^h_N)$ for every $G$-graded $S$-module $N$ (\ref{d30} A)) and that $h_*$ is conservative (\ref{b10}). 

``(vi)$\Rightarrow$(vii)'': Clear.

``(vii)$\Rightarrow$(vi)'': Let $M$ be a $G$-graded $S$-module. There exist a free $G$-graded $S$-module $L$ and an epimorphism $p\colon L\twoheadrightarrow M$ in $\grmod^G(S)$, yielding a commutative diagram \[\xymatrix@C40pt{h_*(\hm{G}{S}{M}{\bullet})\ar@{ >->}[r]^(.47){\eta^h_{M,\bullet}}\ar@{ >->}[d]_{h_*(\hm{G}{S}{p}{\bullet})}&\hm{G}{R}{h_*(M)}{h_*(\bullet)}\ar@{ >->}[d]^{\hm{G}{R}{h_*(p)}{h_*(\bullet)}}\\h_*(\hm{G}{S}{L}{\bullet})\ar@{ >->}[r]^(.47){\eta^h_{L,\bullet}}&\hm{G}{R}{h_*(L)}{h_*(\bullet)}.}\] Let $N$ be a $G$-graded $S$-module, and let $u\in\hm{G}{R}{h_*(M)}{h_*(N)}$ be such that the map underlying $u\circ h_*(p)$ is $S$-linear. If $x\in M^\hme$ and $s\in S^\hme$, then there exists $y\in L^\hme$ with $x=p(y)$, and it follows $u(sx)=u(sp(y))=u(p(sy))=su(p(y))=su(x)$. Thus, the map underlying $u$ is $S$-linear. This shows that the above diagram of functors is cartesian. Hence, we may replace $M$ by $L$ and thus suppose that $M$ is free (\cite[08N4]{stacks}). So, there exists a family $(g_i)_{i\in I}$ in $G$ with $M=\bigoplus_{i\in I}S(g_i)$. By \ref{b40} and \ref{c79} a) there is a commutative diagram \[\xymatrix@C70pt@R15pt{h_*(\hm{G}{S}{\bigoplus_{i\in I}S(g_i)}{\bullet})\ar@{ >->}[r]^(.48){\eta^h_{\bigoplus_{i\in I}S(g_i),\bullet}}\ar[d]_\cong&\hm{G}{R}{h_*(\bigoplus_{i\in I}S(g_i))}{h_*(\bullet)}\ar[d]^\cong\\\prod_{i\in I}h_*(\hm{G}{S}{S}{\bullet})(-g_i)\ar@{ >->}[r]^(.47){\prod_{i\in I}\eta^h_{S,\bullet}(-g_i)}&\prod_{i\in I}\hm{G}{R}{h_*(S)}{h_*(\bullet)}(-g_i),}\] where the vertical morphisms are the canonical ones. If $\eta^h_{S,\bullet}$ is an isomorphism, then so is $\prod_{i\in I}\eta^h_{S,\bullet}(-g_i)$, and the above diagram implies that $\eta^h_{M,\bullet}=\eta^h_{\bigoplus_{i\in I}S(g_i),\bullet}$ is an isomorphism, too.
\end{proof}

\begin{cor}\label{d80}
The morphism of $G$-graded rings $h$ is an epimorphism if and only if the morphism of $H$-graded rings $h\ps$ is an epimorphism.
\end{cor}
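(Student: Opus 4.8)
The plan is to reduce the whole equivalence to the single morphism $\sigma^h_S$, exploiting the pointwise characterisation of epimorphisms supplied by \ref{d70} together with the good behaviour of $\sigma^h_S$ under coarsening. The advantage of working with $S$ rather than with arbitrary modules is that $S$ coarsens to precisely the ring $S\ps$ over which the corresponding datum for $h\ps$ is formed, so no essential surjectivity of the coarsening functor is needed.

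First I would invoke the equivalence (i)$\Leftrightarrow$(ii) of \ref{d70}: the morphism $h$ of $G$-graded rings is an epimorphism if and only if $\sigma^h_S$ is an isomorphism in $\grmod^G(S)$. Applying the same equivalence to the morphism of $H$-graded rings $h\ps\colon R\ps\to S\ps$ shows that $h\ps$ is an epimorphism if and only if $\sigma^{h\ps}_{S\ps}$ is an isomorphism in $\grmod^H(S\ps)$. Next I would evaluate the commutative diagram of functors in \ref{c10} B) at the $G$-graded $S$-module $S$. Since $(h\ps)_*(S\ps)=h_*(S)\ps$ (\ref{b10}), this diagram identifies $(\sigma^h_S)\ps$ with $\sigma^{h\ps}_{S\ps}$ via the canonical isomorphism $h^*(h_*(S))\ps\cong(h\ps)^*((h\ps)_*(S\ps))$, so $\sigma^{h\ps}_{S\ps}$ is an isomorphism if and only if $(\sigma^h_S)\ps$ is one. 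Finally, as the coarsening functor $\bullet\ps$ is conservative (\ref{a10}), $(\sigma^h_S)\ps$ is an isomorphism if and only if $\sigma^h_S$ is. Chaining these four equivalences gives $h$ an epimorphism $\Leftrightarrow\sigma^h_S$ iso $\Leftrightarrow(\sigma^h_S)\ps$ iso $\Leftrightarrow\sigma^{h\ps}_{S\ps}$ iso $\Leftrightarrow h\ps$ an epimorphism, which is the claim.

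The main obstacle to avoid is the natural temptation to argue through the whole natural transformations $\sigma^h$, $\gamma^h$ or $\eta^h$ (conditions (iii), (v), (vi) of \ref{d70}) and their coarsening-compatibility diagrams (\ref{c10} B), \ref{d20} B), \ref{d30} B)). The difficulty is that these diagrams relate the $\bullet\ps$-values only on coarsened modules $M\ps$, whereas establishing that $h\ps$ is an epimorphism via such a condition would require control over \emph{all} $H$-graded $S\ps$-modules, and $\bullet\ps$ need not be essentially surjective. The decisive simplification is that condition (ii) is a single instance at $S$, so this obstruction never arises; the only inputs genuinely required are the equivalence (i)$\Leftrightarrow$(ii) of \ref{d70}, the compatibility of $\sigma$ with coarsening from \ref{c10} B), and the conservativity of $\bullet\ps$ from \ref{a10}.
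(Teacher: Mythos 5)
Your proof is correct and is essentially the paper's own argument: the paper derives \ref{d80} "immediately from \ref{c10} B) and \ref{d70}", which amounts to exactly your chain --- $h$ epi $\Leftrightarrow\sigma^h_S$ iso (by (i)$\Leftrightarrow$(ii) of \ref{d70}), the identification of $(\sigma^h_S)\ps$ with $\sigma^{h\ps}_{S\ps}$ via the diagram of \ref{c10} B) evaluated at $S$, and the conservativity of $\bullet\ps$ from \ref{a10}. Your closing remark about why the conditions quantified over all modules (such as (iii), (v), (vi)) cannot be transported directly, since $\bullet\ps$ is not essentially surjective, is precisely the reason the reduction must pass through the single-module condition (ii).
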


\begin{proof}
Immediately from \ref{c10} B) and \ref{d70}.
\end{proof}

\begin{cor}
$\sigma^h$, $\gamma^h$, $\eta^h$ and $\widetilde{\rho}^h$ are isomorphisms if and only if $\sigma^{h\ps}$, $\gamma^{h\ps}$, $\eta^{h\ps}$ and $\widetilde{\rho}^{h\ps}$ are so.
\end{cor}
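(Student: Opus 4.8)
The plan is to reduce everything to Theorem \ref{d70} together with its Corollary \ref{d80}, so that almost no work is required beyond bookkeeping. The crucial observation is that Theorem \ref{d70} identifies each of the four conditions ``$\sigma^h$ is an isomorphism'' (iii), ``$\widetilde\rho^h$ is an isomorphism'' (iv), ``$\gamma^h$ is an isomorphism'' (v), and ``$\eta^h$ is an isomorphism'' (vi) with the single condition (i) that $h$ be an epimorphism of $G$-graded rings. Consequently the conjunction asserting that all four of $\sigma^h$, $\gamma^h$, $\eta^h$ and $\widetilde\rho^h$ are isomorphisms is itself equivalent to $h$ being an epimorphism.

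Next I would apply Theorem \ref{d70} a second time, now to the morphism $h\ps\colon R\ps\rightarrow S\ps$ of $H$-graded rings. Since conditions (iii)--(vi) of that theorem are intrinsic to the morphism under consideration and make no reference to any ambient coarsening, the theorem applies verbatim and shows that $\sigma^{h\ps}$, $\gamma^{h\ps}$, $\eta^{h\ps}$ and $\widetilde\rho^{h\ps}$ are all isomorphisms if and only if $h\ps$ is an epimorphism of $H$-graded rings.

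Finally I would invoke Corollary \ref{d80}, which states precisely that $h$ is an epimorphism of $G$-graded rings if and only if $h\ps$ is an epimorphism of $H$-graded rings. Chaining the three equivalences then yields the assertion. There is no genuine obstacle in this argument; the only point that deserves a moment's care is the legitimacy of applying \ref{d70} to $h\ps$ in place of $h$, that is, recognising that Theorem \ref{d70} is a statement about an arbitrary morphism of graded rings and that $h\ps$ is exactly such a morphism, with the group $H$ playing the role of the grading group.
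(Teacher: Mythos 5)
Your proposal is correct and matches the paper's own proof, which simply cites Theorem \ref{d70} and Corollary \ref{d80}: both arguments identify each of the four isomorphism conditions (for $h$ and for $h\ps$) with the epimorphism property of the respective ring morphism via \ref{d70}, and then transfer across the coarsening via \ref{d80}. Your added remark that \ref{d70} applies verbatim to $h\ps$ as a morphism of $H$-graded rings is exactly the (implicit) justification the paper relies on.
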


\begin{proof}
Immediately from \ref{d70} and \ref{d80}.
\end{proof}


\noindent\textbf{Acknowledgements.} I am grateful to Jeremy Rickard who helped me to understand the functors $h_+$ and $h^+$ (\url{https://mathoverflow.net/questions/300531}), and to the pseudonymous MO user abx who answered questions about purity and about commutation of $h^*$ with infinite products (\url{https://mathoverflow.net/questions/199721}, \url{https://mathoverflow.net/questions/201022}).

\end{document}